\newtheorem{theorem}{Theorem}
\newtheorem{lemma}[theorem]{Lemma}
\newtheorem{proposition}[theorem]{Proposition}
\newcommand\lbound{{7}}
\DeclareTextCompositeCommand{\v}{OT1}{l}{l\nobreak\hspace{-.1em}'}
\DeclareTextCompositeCommand{\v}{OT1}{t}{t\nobreak\hspace{-.1em}'\nobreak\hspace{-.15em}}
\begin{document}
\title{Planar graph with twin-width seven\thanks{Both authors have been supported by the MUNI Award in Science and Humanities (MUNI/I/1677/2018) of the Grant Agency of Masaryk University.}}

\author{Daniel Kr{\'a}\v{l}\thanks{Faculty of Informatics, Masaryk University, Botanick\'a 68A, 602 00 Brno, Czech Republic. E-mail: {\tt \{dkral,lamaison\}@fi.muni.cz}.}\and
\newcounter{lth}
\setcounter{lth}{2}
        Ander Lamaison$^\fnsymbol{lth}$}
\date{} 
\maketitle

\begin{abstract}
We construct a planar graph with twin-width equal to seven.
\end{abstract}

\section{Introduction}
\label{sec:intro}

Twin-width is a graph parameter,
which has recently been introduced by Bonnet, Kim, Thomass\'e and Watrigant~\cite{BonKTW20,BonKTW22} and
further developed in particular in~\cite{BonGKTW21b,BonGKTW21c,BonGOSTT22,BonGOT22,BonKRT22,BonGTT22,BonCKKLT22},
also see~\cite{Tho22}.
The notion has rapidly become an important notion in theoretical computer science,
which is witnessed by a large number of recent papers exploring its algorithmic aspects~\cite{BerBD22,BonGKTW21c,BonGOSTT22,BonKRTW21,PilSZ22},
combinatorial properties~\cite{AhnHKO22,BalH21,BonGKTW21c,BonKRT22,DreGJOR22,PilS22}, and
connections to logic and model theory~\cite{BonGOSTT22,BonKTW20,BonKTW22,BonNOST21,GajPT22}.

Classes of graphs with bounded twin-width (we refer to Section~\ref{sec:notation} for the definition of the parameter)
include both classes of sparse graphs and classes of dense graphs,
in particular, classes of graphs with bounded treewidth, with bounded rank-width (or equivalently with bounded clique-width), and
excluding a fixed graph as a minor have bounded twin-width.
As the first order model checking is fixed parameter tractable for classes of graphs with bounded twin-width~\cite{BonKTW20,BonKTW22},
the notion led to a unified view of various earlier results of fixed parameter tractability of first order model checking of graph properties, and
more generally first order model checking properties of other combinatorial structures such as
matrices, permutations and posets~\cite{BalH21,BonGOSTT22,BonNOST21}.

One of the most important graph classes is that of planar graphs.
Since the class of planar graphs is a proper minor-closed class of graphs,
planar graphs have bounded twin-width by general results obtained in the seminal paper of Bonnet et al.~\cite{BonKTW20,BonKTW22},
also see~\cite{BonGKTW21b,DvoHJLW21}.
The first explicit bounds have been obtained by Bonnet, Kwon and Wood~\cite{BonKW22} and Jacob and Pilipczuk~\cite{JacP22},
who showed that the twin-width of planar graphs is at most 583 and 183, respectively.
The bound was further improved to 37 by Bekos, Lozzo, Hlin\v en\'y and Kaufmann~\cite{BekLHK22}, 
to $9$ by Hlin\v en\'y~\cite{Hli22}, and
eventually to $8$ by Hlin\v en\'y and Jedelsk\'y~\cite{HliJ22}; also see~\cite{Hli23}.
On the other hand, any planar graph with minimum degree five, no adjacent vertices of degree $5$ and no separating triangles
has twin-width at least $5$.
In this paper,
we present a first non-trivial lower bound on the twin-width of planar graphs
by constructing a planar graph with twin-width equal to $7$.

\section{Notation}
\label{sec:notation}

We now fix notation used throughout the paper.
Unless we use the term \emph{multigraph},
graphs considered in this paper are simple i.e., without parallel edges and loops;
on the other hand, multigraphs may contain parallel edges and loops.
A \emph{$k$-vertex} is a vertex of degree $k$,
a \emph{$(\le k)$-vertex} is a vertex of degree at most $k$, and
a \emph{$(\ge k)$-vertex} is a vertex of degree at least $k$.
Similarly, a \emph{$k$-cycle} is a cycle of length $k$, and
a \emph{$(\le k)$-cycle} is a cycle of length at most $k$.

We next formally define the notion of twin-width.
A \emph{trigraph} is a graph with some of its edges being red;
the \emph{red degree} of a vertex $v$ is the number of red edges incident with $v$.
If $G$ is a trigraph and $v$ and $v'$ form a pair of its (not necessarily adjacent) vertices,
then the trigraph obtained from $G$ by \emph{contracting} the vertices $v$ and $v'$
is the trigraph obtained from $G$ by removing the vertices $v$ and $v'$ and introducing a new vertex $w$ such that
$w$ is adjacent to each vertex $u$ that is adjacent to at least one of the vertices $v$ and $v'$ in $G$ and
the edge $wu$ is red if $u$ is not adjacent to both $v$ and $v'$ or at least one of the edges $vu$ and $v'u$ is red,
i.e., the edge $wu$ is not red only if $G$ contains both edges $vu$ and $v'u$ and neither of the two edges is red.
A \emph{$k$-contraction} is a contraction of vertices such that the resulting graph has maximum red degree at most $k$.
The \emph{twin-width} of a graph $G$ is the smallest integer $k$ such that
there exists a sequence of $k$-contractions that reduces the graph $G$,
i.e., the trigraph with the same vertices and edges as $G$ and no red edges,
is reduced by the sequence to a single vertex.

We conclude the section with fixing notation related to plane graphs and
recalling a classical result on light edges,
which we later need in our arguments.
A \emph{plane multigraph} is an embedding of a multigraph in the plane.
The \emph{length} of a face of a connected plane multigraph is the length of the facial walk (counting twice each edge visited twice by the facial walk), and
a \emph{$k$-face} is a face of length $k$.
Finally, a cycle $C$ of a plane (multi)graph $G$ is \emph{separating}
if both the interior and the exterior of the cycle $C$ contains a vertex of $G$.
We will also need the following result on the existence of edges joining vertices of small degrees due to Borodin~\cite{Bor89},
also see~\cite{CraW17}.

\begin{proposition}
\label{prop:normal}
Every plane multigraph with minimum degree at least three and minimum face length at least three
contains an edge $e$ such that the sum of the degrees of the end vertices of $e$ is at most $11$, or 
a $4$-cycle passing through two $3$-vertices and a common $(\le 10)$-vertex.
\end{proposition}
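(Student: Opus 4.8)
The plan is a discharging argument by contradiction. Suppose $G$ is a plane multigraph with minimum degree at least three and minimum face length at least three that contains neither an edge whose end vertices have degree sum at most $11$ nor a $4$-cycle through two $3$-vertices and a common $(\le 10)$-vertex; it suffices to treat the connected case, since in general Euler's formula will still yield a negative total charge. The first step is to record what the absence of such a ``light'' edge forces: every $3$-vertex has all neighbours of degree at least $9$, every $4$-vertex has all neighbours of degree at least $8$, every $5$-vertex has all neighbours of degree at least $7$, and every $6$-vertex has all neighbours of degree at least $6$. In particular the vertices of degree at most $5$ form an independent set, so on any facial walk no two consecutive vertices both have degree at most $5$, and a face of length $\ell\ge 4$ is incident with at most $\lfloor \ell/2\rfloor$ vertices of degree at most five.

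Next I would set up charges: each vertex $v$ receives $\deg(v)-6$ and each face $h$ of length $\ell(h)$ receives $2\ell(h)-6$; by Euler's formula these sum to $-12$ (in general $-6(1+c)$ for $c$ components). Since $\ell(h)\ge 3$, every face starts nonnegative, every vertex of degree at least $7$ starts positive, $6$-vertices start at $0$, and only the vertices of degree $3,4,5$ are negative, with charges $-3,-2,-1$. The redistribution pushes charge toward these small vertices: (i) every face of length at least four gives $1$ to each incident vertex of degree at most five (affordable, as there are at most $\lfloor \ell(h)/2\rfloor\le 2\ell(h)-6$ of them); and (ii) for every triangular face having a vertex $u$ of degree at most five (necessarily unique), each of the other two vertices of that triangle --- both of degree at least $7$ --- sends $\sigma(\deg u)$ to $u$, where $\sigma(3)=\tfrac12$, $\sigma(4)=\tfrac14$, $\sigma(5)=\tfrac1{10}$. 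A short computation shows that a vertex of degree $k\in\{3,4,5\}$ with $t$ incident triangular faces ends with charge $(k-6)+(k-t)+2t\,\sigma(k)\ge 0$, with equality in particular when all its faces are triangles; and every face, as well as every $6$-vertex, stays nonnegative. Thus every object whose degree or length is not in $\{7,8,9,10,11\}$ clearly ends nonnegative, as do, by a direct local count using the independence of the small vertices, the vertices of degree $7$, $8$ and at least $12$.

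The crux --- where I expect the real work, and the only place the two forbidden configurations are used together --- is showing that a vertex $w$ of degree $9$ or $10$, which carries only $3$ or $4$ units of charge yet may be adjacent to $3$-vertices each of which can draw a full unit from it, still ends nonnegative. Going around $w$ face by face, a $3$-neighbour extracting a full unit from $w$ must have both faces at its edge to $w$ triangular; two such $3$-neighbours cannot be consecutive around $w$ (they would be adjacent, a light edge), and cannot be separated by exactly one neighbour of $w$ either --- that neighbour would lie on the two intervening triangles, hence be a common neighbour of the two $3$-vertices, and together with $w$ (of degree at most $10$) they would form precisely the excluded $4$-cycle. Hence the full-unit $3$-neighbours are at pairwise cyclic distance at least three around $w$, so there are at most three of them; any remaining low-degree neighbours are $4$- or $5$-vertices, whose presence in turn forbids adjacent $3$-neighbours through further light-edge constraints and which contribute only the smaller amounts $\sigma(4),\sigma(5)$, so one checks that $w$ gives away at most $\deg(w)-6$ in total and ends at exactly $0$. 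With every vertex and every face nonnegative, the total charge is nonnegative, contradicting that it equals $-12$. The main obstacle is exactly this last balancing act: pinning down the rule-(ii) amounts so that every small vertex is still lifted to $0$ while the delicately tight degree-$9$ and degree-$10$ vertices do not go negative.
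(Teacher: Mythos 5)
First, a point of reference: the paper does not prove this proposition at all --- it is quoted as a known theorem of Borodin~\cite{Bor89} (see also~\cite{CraW17}) --- so there is no in-paper argument to compare yours against; a discharging proof of the kind you sketch is the standard (essentially the original) route, and I can confirm that your choice of charges and of the rules with $\sigma(3)=\tfrac12$, $\sigma(4)=\tfrac14$, $\sigma(5)=\tfrac1{10}$ does in fact make every face and every vertex end nonnegative. However, two steps of your verification are genuinely incomplete as written. The more serious one is that vertices of degree $11$ fall through the cracks: they are not "outside $\{7,\dots,11\}$", they are not among the degrees $7$, $8$ and $\ge 12$ you dispatch directly, and your crux only treats degrees $9$ and $10$. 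Degree $11$ is not trivial: such a vertex $w$ has charge $5$ but eleven corners each potentially costing $\tfrac12$, and your key tool --- the forbidden $4$-cycle with $w$ itself playing the role of the common $(\le 10)$-vertex --- is unavailable because $\deg w=11$. One has to argue instead via the independence of the $3$-vertices that at most five $3$-neighbours of $w$ lie on triangular corners, so the $3$-vertex corners number at most ten, and that in the extremal situation (five $3$-neighbours each drawing a full unit) every remaining neighbour of $w$ is adjacent to a $3$-vertex and hence has degree at least $9$, so nothing further is sent; the case closes with $w$ at exactly $0$.

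The second gap is in the degree-$9$/$10$ analysis itself: the assertion that "any remaining low-degree neighbours are $4$- or $5$-vertices" is false. A $3$-neighbour with only one of its two flanking faces triangular draws $\tfrac12$ from $w$ without being a full-unit neighbour, and such half-unit $3$-neighbours are not excluded by your distance argument, which only constrains the full-unit ones. They must be accounted for: for instance, around a $9$-vertex one must rule out three full-unit $3$-neighbours together with any additional positive contribution, which requires observing that once three full-unit $3$-neighbours sit at pairwise cyclic distance three, every other neighbour is adjacent to one of them and therefore has degree at least $9$; similar (slightly longer) counts are needed for the mixed configurations at degree $10$. With these two points repaired --- and with the usual multigraph caveats about vertices appearing several times on a facial walk --- your plan does go through, but as it stands the balancing act you yourself identify as the crux is not actually carried out.
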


Proposition~\ref{prop:normal} yields the following.

\begin{proposition}
\label{prop:edge11}
Every plane multigraph with maximum degree at most seven and with minimum face length at least three
contains an edge $e$ such that the sum of the degrees of the end vertices of $e$ is at most $11$.
\end{proposition}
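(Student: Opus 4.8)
The plan is to deduce the statement from Proposition~\ref{prop:normal} by a short argument by contradiction. So suppose, for contradiction, that $G$ is a plane multigraph with maximum degree at most seven and with minimum face length at least three in which no edge has the property that the sum of the degrees of its end vertices is at most $11$; that is, every edge $uv$ of $G$ satisfies $d(u)+d(v)\ge 12$. Since $d(u)\le 7$ and $d(v)\le 7$, this forces $d(u)\ge 5$ and $d(v)\ge 5$ for each edge $uv$. Hence every vertex of $G$ that is incident with at least one edge has degree $5$, $6$ or $7$; in particular, $G$ has no $3$-vertex.

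Next I would arrange that Proposition~\ref{prop:normal} can be invoked. Isolated vertices lie on no facial walk, so deleting all of them from $G$ changes neither the edge set nor any face length; if this leaves no vertex at all, then $G$ has no edge, which is incompatible with the assumption that every face has length at least three. So after this cleanup (and, if one wishes to be scrupulous about the conventions for disconnected plane multigraphs, after also restricting to a single component) $G$ is a nonempty plane multigraph with minimum degree at least $5$, and therefore at least $3$, while still having minimum face length at least three. Proposition~\ref{prop:normal} now applies and yields one of two outcomes: either an edge $e$ whose two end vertices have degree sum at most $11$, which contradicts our assumption, or a $4$-cycle passing through two $3$-vertices, which is impossible since $G$ has no $3$-vertex. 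Either way we reach a contradiction, and the proposition follows.

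The mathematical content is essentially immediate once Proposition~\ref{prop:normal} is available: the degree bound $\Delta(G)\le 7$ is exactly what is needed to turn ``degree sum at least $12$'' into ``both end vertices have degree at least $5$'', which simultaneously contradicts the first outcome and rules out the second. The only point requiring any care is the preliminary reduction just described, namely checking that discarding isolated vertices (and, if needed, passing to a component) preserves both the minimum-face-length hypothesis and the minimum-degree hypothesis required by Proposition~\ref{prop:normal}; I do not expect any genuine obstacle beyond this routine bookkeeping.
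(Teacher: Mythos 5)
Your proof is correct and is precisely the intended derivation: the paper gives no explicit argument beyond ``Proposition~\ref{prop:normal} yields the following,'' and the implicit reasoning is exactly yours, namely that a counterexample would have all non-isolated vertices of degree at least $12-7=5$, which kills the $4$-cycle alternative of Proposition~\ref{prop:normal} and leaves the desired edge. The extra care about isolated vertices and connectivity is harmless bookkeeping that the paper silently elides.
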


\section{Construction and analysis}
\label{sec:main}

In this section, we present a construction of a planar graph with twin-width seven and
analyze the constructed graph to show that its twin-width is indeed seven.

\subsection{Construction}

\begin{figure}
\begin{center}
\epsfbox{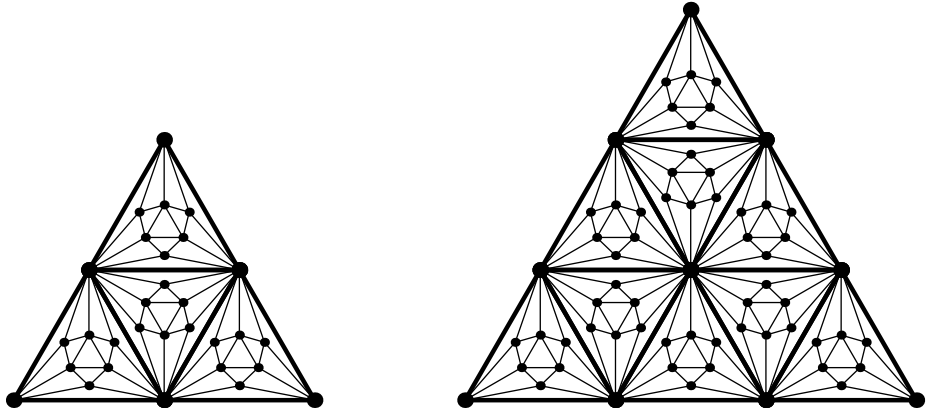}
\end{center}
\caption{The graphs placed in each triangular face of the icosahedron for $k=1$ and $k=2$
         in the construction of the graph $G_k$.
	 The edges of the skeleton are drawn bold.}
\label{fig:Gk-triang}
\end{figure}

Recall that the vertices and edges of the icosahedron form a $5$-regular planar graph,
which has $12$ vertices, $30$ edges and $20$ faces.
Let $G_k$ be the graph obtained from the icosahedron graph as follows:
\begin{itemize}
\item subdivide each edge exactly $k$ times,
\item paste the appropriate piece of the triangular lattice in each of the $20$ faces,
\item insert a triangle into each of the $20(k+1)^2$ faces and
      join each of its three vertices to a different vertex of the original face, and
\item insert a vertex into each of the $60(k+1)^2$ faces of size four and
      join it to all four vertices on the boundary of the face.
\end{itemize}
The graphs created in each triangular face of the icosahedron graph for $k=1$ and $k=2$
are depicted in Figure~\ref{fig:Gk-triang}.
Observe that the graph $G_k$ has
\begin{itemize}
\item $60(k+1)^2$ vertices of degree four,
\item $60(k+1)^2$ vertices of degree five,
\item $12$ vertices of degree twenty, and
\item $30k+20\cdot\frac{k(k-1)}{2}=10k(k+2)$ vertices of degree twenty four.
\end{itemize}

The vertices of degree at least twenty of $G_k$ are referred to as \emph{skeleton} vertices, and
the other vertices as \emph{non-skeleton} vertices.
The \emph{skeleton} of $G_k$ is the subgraph induced by the skeleton vertices.
More generally, if a graph $H$ is obtained by a sequence of contractions from $G_k$,
we refer to a vertex $v$ of $H$ as a \emph{skeleton} vertex
if at least one of the vertices contracted to $v$ is a skeleton vertex, and
as a \emph{non-skeleton} vertex otherwise.
A \emph{skeleton neighbor} of a vertex $v$ is a neighbor of $v$ that is a skeleton vertex.

The \emph{skeleton} of a graph $H$, which is obtained from $G_k$ by contracting some of its vertices,
is the graph formed by the skeleton vertices of $H$,
where two vertices $x$ and $y$ are adjacent iff there exist skeleton vertices $x'$ and $y'$ of $G_k$ such that
$x'$ is $x$ or has been contracted to $x$, $y'$ is $y$ or has been contracted to $y$, and $x'y'$ is an edge of $G_k$.
The \emph{skeleton degree} of a skeleton vertex is the number of its neighbors in the skeleton of $H$.
In particular, the skeleton degree of every vertex of $G_k$ is either five or six.
In general, the skeleton degree of a vertex can be smaller than the number of its skeleton neighbors.

A contraction of two vertices of the graph $H$ is \emph{semiplanar}
if either at least one of the vertices is a non-skeleton vertex, or
both vertices are skeleton vertices and
they are joined by an edge of the skeleton of $H$ not contained in a separating $3$-cycle of the skeleton.
Observe that a semiplanar contraction either preserves the skeleton or contracts an edge of it, and
the skeleton of a graph obtained from $G_k$ by a sequence of semiplanar contractions is a triangulation.
In particular, each face of the skeleton of any of graph obtained from a graph $G_k$ by a sequence of semiplanar contractions
is a $3$-face and corresponds to a face of the skeleton of $G_k$ (but not necessarily vice versa).

\subsection{Upper bound}

We start with an auxiliary lemma.
Note that the lemma implies that
the twin-width of any plane triangulation with maximum degree at most seven is at most seven.

\begin{lemma}
\label{lm:contract7}
Let $H$ be a plane triangulation with maximum degree at most seven.
There exists a sequence of edge-contractions that reduces $H$ to a single vertex such that
all graphs obtained during the process have maximum degree at most seven.
\end{lemma}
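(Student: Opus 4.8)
The key tool is Proposition \ref{prop:edge11}: a plane (multi)graph with max degree $\le 7$ and min face length $\ge 3$ has an edge whose endpoints have degree sum $\le 11$. The plan is to contract such a light edge and argue that the resulting graph is still a plane triangulation (or near-triangulation) with max degree $\le 7$, then induct on the number of vertices.

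Let me think about what happens when we contract an edge $e = xy$ in a plane triangulation $H$. The two triangular faces on either side of $e$, say $xyz$ and $xyw$, collapse — after contraction the edges $xz, yz$ become parallel (as do $xw, yw$), creating a multigraph. To stay within simple triangulations we'd want to delete one copy of each parallel pair. The new merged vertex $v$ has degree $\deg(x) + \deg(y) - 2 - 2 = \deg(x) + \deg(y) - 4$ (subtract 2 for the edge $xy$ counted from each side, and subtract 2 more for the two identifications at $z$ and $w$). Wait — more carefully: $v$ is adjacent to $(N(x) \cup N(y)) \setminus \{x,y\}$, and $|N(x) \cap N(y)| \ge 2$ (namely $z, w$), so $\deg(v) = \deg(x) + \deg(y) - 2 - |N(x)\cap N(y)| \le \deg(x) + \deg(y) - 4 \le 11 - 4 = 7$. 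Good — so the merged vertex has degree $\le 7$.

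The subtlety: \emph{other} vertices' degrees. A vertex $u \ne v$ keeps its degree unless $u \in N(x) \cap N(y)$, in which case its degree drops by 1 (the two edges $ux, uy$ become one edge $uv$). So degrees of vertices other than $v$ never increase. Hence if $H$ had max degree $\le 7$, so does the contracted graph. Also $|N(x)\cap N(y)| \ge 2$ guarantees we genuinely create the simple triangulation on one fewer vertex — but we must be sure the result is still a \emph{triangulation} and, crucially, that $e$ is not in a separating triangle, because if $N(x)\cap N(y)$ contains a vertex other than $z,w$ then $xyu$ is a separating triangle and contracting $e$ disconnects/creates a non-planar or non-triangulated mess. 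So the right move is: pick a light edge $e$ that is \emph{not} in a separating triangle. In a triangulation with $\ge 5$ vertices, is there always a light edge avoiding separating triangles? This is the main obstacle.

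To handle it: if the light edge $e=xy$ lies in a separating triangle $xyu$, then $\{x,y,u\}$ separates $H$ into an inside part and an outside part, each a smaller plane triangulation (with the triangle $xyu$ as outer/inner face) with max degree $\le 7$ — wait, degrees in the pieces can only be smaller. Recurse into the smaller piece; eventually one reaches a piece where the relevant light edge is in no separating triangle (the minimal separating-triangle-free piece, e.g. a $K_4$ region), contract there. Actually the cleanest framing: induct on $|V(H)|$; if $H$ has a non-facial (separating) triangle $T$, split along $T$, contract the inside piece down to the triangle $T$ by induction (each intermediate graph is a triangulation of max degree $\le 7$ after re-gluing the outside), reducing $|V(H)|$; if $H$ has no separating triangle (a $4$-connected triangulation), then every triangle is a face, so the light edge from Proposition \ref{prop:edge11} is automatically in no separating triangle, and contracting it (deleting the two resulting parallel edges) yields a smaller $4$-connected-or-not triangulation with max degree $\le 7$. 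Base case $|V(H)| = 4$: $H = K_4$, contract any edge to get a triangle, then down to a point, all degrees $\le 3$. The one thing to double-check is that after contracting a light edge in the $4$-connected case the graph is still a simple plane triangulation — it is, since $N(x)\cap N(y) = \{z,w\}$ exactly, so deleting one copy each of the two parallel pairs leaves a simple plane graph every face of which is a triangle. I expect the separating-triangle bookkeeping — making precise that contractions performed "inside" a separating triangle keep the whole graph a valid max-degree-$\le 7$ triangulation at every step — to be the part requiring the most care.
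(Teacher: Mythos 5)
Your core mechanism is the same as the paper's: apply Proposition~\ref{prop:edge11} to find an edge $xy$ with $\deg(x)+\deg(y)\le 11$, contract it, and use $|N(x)\cap N(y)|\ge 2$ to get a new vertex of degree at most $11-4=7$ while no other degree increases. However, the separating-triangle case --- which you yourself flag as ``the part requiring the most care'' --- is a genuine gap, not mere bookkeeping, and your proposed recursion does not close it. Two concrete problems. First, your induction hypothesis reduces a triangulation to a \emph{single vertex}; to ``contract the inside piece down to the triangle $T$'' you need a different statement (reduce to a prescribed facial triangle without ever contracting into it), and Proposition~\ref{prop:edge11} applied to the piece may well hand you an edge of $T$ itself. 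Second, and more seriously, the degree accounting does not transfer across the cut: an edge of the inside piece incident to a vertex of $T$ can have degree sum $\le 11$ \emph{in the piece} while its $T$-endpoint has up to three further neighbors outside the piece, so in $H$ the contraction can produce a vertex of degree up to $10$. So ``degrees in the pieces can only be smaller'' works against you rather than for you, and the recursion as sketched fails.

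The paper avoids separating triangles entirely by proving a stronger statement for plane \emph{multigraphs} with no $2$-faces (so Proposition~\ref{prop:edge11} still applies, and in fact all faces stay $3$-faces throughout). When a light edge $uv$ is contracted, only the two pairs of parallel edges bounding the two arising $2$-faces are simplified (one edge deleted from each); parallel edges created by a separating triangle through $uv$ do \emph{not} bound a $2$-face and are both kept. Then the new vertex has multigraph degree exactly $\deg(u)+\deg(v)-4\le 7$, every other vertex's degree is unchanged or drops by one, and the result is again a multigraph triangulation --- no case distinction on separating triangles is ever needed. Since the multigraph degree dominates the number of distinct neighbors, the simple-graph statement of the lemma follows. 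If you want to salvage your simple-graph route, you would need to prove the strengthened ``contract down to a prescribed facial triangle'' lemma and control the boundary degrees, which is substantially more work than the multigraph device.
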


\begin{proof}
We prove by induction a stronger statement when $H$ is allowed to be a plane multigraph without $2$-faces.
In particular,
when contracting an edge during the induction step,
we remove one of two parallel edges in each of the two arising $2$-faces,
however, we keep the remaining parallel edges, i.e., parallel edges that do not bound a face.

The base of the induction is the case that $H$ is a triangle.
For the proof of the induction step, consider a plane triangulation $H$ with maximum degree at most seven.
If $H$ has a vertex of degree at most two, we contract an edge incident with $v$;
note that if $v$ has degree two, we remove two of the three resulting parallel edges.
If the minimum degree of $H$ is at least three,
we contract any edge $uv$ such that the sum of the degrees of $u$ and $v$ is at most $11$;
such an edge exists by Proposition~\ref{prop:edge11}.
Note that the degree of the vertex created by the contraction of the edge $uv$ is at most $11-4=7$ (here,
we use that all faces are $3$-faces), and the degree of no other vertex increases.
\end{proof}

We next show that the twin-width of $G_k$ is at most seven.

\begin{lemma}
\label{lm:upper}
For every $k\ge 0$, the twin-width of $G_k$ is at most seven.
\end{lemma}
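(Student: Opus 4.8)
The plan is to describe an explicit sequence of $7$-contractions that reduces $G_k$ to a single vertex, proceeding in two phases. In the first phase, we eliminate all non-skeleton vertices by contracting each of them into a suitable skeleton neighbor, using only semiplanar contractions; in the second phase, we are left (essentially) with the skeleton of $G_k$, which is a plane triangulation, and we finish using Lemma~\ref{lm:contract7}. The key point that makes the first phase possible is the structure built into $G_k$: every non-skeleton vertex — the degree-$4$ face vertices, the degree-$5$ triangle vertices — is attached to the skeleton in a controlled way, and the skeleton vertices themselves have degree $20$ or $24$ in $G_k$ but skeleton degree only $5$ or $6$. So there is a large "budget" of non-red edges at each skeleton vertex, and the danger is only the red degree.

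Here is how I would organize the first phase. Process the non-skeleton vertices in a carefully chosen order, face by face of the skeleton of $G_k$ (each such face carries a fixed finite gadget, the same for every face, determined only by $k$). When we contract a non-skeleton vertex $v$ into a skeleton vertex $x$, the new vertex $x$ acquires red edges to (at most) the former neighbors of $v$; but since $v$ has bounded degree and all its neighbors lie in the gadget of one or two adjacent skeleton faces, these red edges are few and localized. The thing to verify is an invariant: at every moment, each skeleton vertex $x$ has red degree at most $7$, where the red edges at $x$ go only to vertices in the at most $5$ or $6$ skeleton faces incident to $x$. Because each such face contributes a bounded number of red edges and because we clear the gadgets one face at a time — finishing a face completely (turning its internal red edges into contractions that merge gadget debris down toward the three corner skeleton vertices) before moving to the next — the count stays bounded. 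Concretely, once a face is processed, its entire interior has been absorbed into its three corner skeleton vertices, contributing red edges only among those three corners; since a skeleton vertex sits in at most $6$ faces, and adjacent corners are shared, a short case analysis (independent of $k$) shows red degree at most $7$ is maintained. This reduces $G_k$ to a trigraph whose vertices are exactly the skeleton vertices, with the skeleton adjacencies as ordinary edges and a bounded number of red edges.

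For the second phase, observe that after phase one the underlying graph is the skeleton of $G_k$, a plane triangulation in which every vertex has degree $5$ or $6$ — in particular maximum degree at most seven. Ignoring colors, Lemma~\ref{lm:contract7} supplies a sequence of edge-contractions reducing it to a point with all intermediate graphs of maximum degree at most seven; since red degree is bounded by total degree, this sequence is a sequence of $7$-contractions. (One must check the few residual red edges from phase one do not spoil this: at the start of phase two the red edges form a bounded-size set near the $12$ original icosahedron vertices, and one can either absorb them first by a constant number of extra contractions, or simply note that each vertex still has total degree at most $7$ throughout, so red degree is automatically at most $7$.) Composing the two phases yields a sequence of $7$-contractions reducing $G_k$ to a single vertex, proving the twin-width of $G_k$ is at most seven.

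The main obstacle is the bookkeeping in phase one: one must exhibit an explicit contraction order within a single skeleton face's gadget and prove the red-degree invariant is preserved, including at skeleton vertices shared between a just-processed face and not-yet-processed faces. This is a finite check (the gadget depends on $k$ only through its size, and the relevant local picture around a skeleton vertex is bounded uniformly in $k$, since a skeleton vertex has at most $6$ incident faces and each face contributes a bounded red-degree increment), but it is where the real work lies; the hardest sub-case is a skeleton vertex of skeleton degree $6$, where the slack between $7$ and $6$ is tight and the processing order of its surrounding faces must be chosen so that red edges to at most one "active" neighboring face are present at any time.
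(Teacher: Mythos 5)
Your overall two-phase strategy coincides with the paper's: first clear the non-skeleton vertices by local contractions, then observe that the skeleton is a plane triangulation of maximum degree six and invoke Lemma~\ref{lm:contract7} (your remark that residual red edges are harmless because red degree is bounded by total degree, which stays at most seven, is exactly how the paper handles the hand-off). The gap is in phase one, and it is not a peripheral ``finite check'' that can be deferred: it is the entire content of the lemma. You never exhibit a contraction order or verify the red-degree invariant, and the order genuinely matters. Note first that each face of the \emph{skeleton} of $G_k$ contains exactly six non-skeleton vertices (three $5$-vertices of the inserted triangle and three $4$-vertices of the quadrilateral faces), independently of $k$ -- your description of a gadget ``determined only by $k$'' suggests you are picturing the faces of the icosahedron rather than of the skeleton. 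More importantly, your plan of contracting each non-skeleton vertex directly ``into a suitable skeleton neighbor'' is more dangerous than you acknowledge: a skeleton vertex of skeleton degree six has $18$ non-skeleton neighbors in $G_k$, and absorbing, say, an inner-triangle $5$-vertex into its skeleton neighbor immediately creates red edges to the two other inner-triangle vertices of that face (they are not adjacent to the skeleton vertex); doing this around all six incident faces, in essentially any order that processes faces one at a time, drives the red degree of that skeleton vertex well past seven before the debris can be cleaned up.

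The paper avoids this by inserting an intermediate stage. Within each skeleton face it first contracts the six non-skeleton vertices onto a chosen non-skeleton $5$-vertex $v$ of that face; after the first such contraction $v$ has at most seven neighbors in total (the three corners plus at most four remaining interior vertices), so its red degree never exceeds seven, while each skeleton corner sees at most one red neighbor per incident face, hence at most six. This yields a graph $H_1$ in which every skeleton face carries a single non-skeleton vertex red-joined to its three corners. Only then are these per-face vertices absorbed into the skeleton, and the order is again essential: the skeleton vertices are processed as $v_1,\dots,v_n$, and at step $i$ only the face-vertices of faces whose other two corners lie among $v_{i+1},\dots,v_n$ are merged and contracted into $v_i$; this forward-only rule is what keeps every skeleton vertex's red degree at most six throughout. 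Your proposal correctly identifies where the difficulty sits (the degree-six skeleton vertices, where the slack is tight) but supplies neither the intermediate collapse nor the ordering that resolves it, so as written it does not constitute a proof.
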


\begin{proof}
We present a sequence of $7$-contractions that reduces $G_k$ to a single vertex.
First, consider one face of the skeleton $G_k$ after another, and
for each face $f$ of the skeleton,
fix a non-skeleton $5$-vertex $v$ contained in $f$ and contract the remaining non-skeleton vertices to $v$ one after another.
Let $H_1$ be the resulting graph;
note that $H_1$ consists of the skeleton of $G_k$ with a single vertex inserted to each face,
which is joined by red edges to all three skeleton vertices bounding the face.
Throughout the whole process of contractions transforming $G_k$ to $H_1$,
each skeleton vertex has at most one red neighbor in each face of the skeleton and so its red degree is at most six.
During the process of contractions within a face $f$ of the skeleton,
the only red edges are incident with the chosen vertex $v$ in the face $f$;
after the first contraction within the face $f$ is performed,
the vertex $v$ has at most seven neighbors:
three skeleton vertices bounding the face $f$ and at most four remaining non-skeleton vertices contained in $f$.
Hence, the red degree of $v$ does not exceed seven (and
the red degree of any other non-skeleton vertex contained in $f$ is at most one as its red neighbor can be $v$ only).
We conclude that $H_1$ has been obtained by a sequence of $7$-contractions from $G_k$.

Let $v_1,\ldots,v_n$ be the skeleton vertices of $H_1$ (listed in an arbitrary order); note that $n=10k^2+20k+12$.
Consecutively for $i=1,\ldots,n$, 
contract the non-skeleton vertices contained in the faces bounded by $v_i$ and two of the vertices among $v_{i+1},\ldots,n$ 
to a single vertex and then contract the resulting vertex to the vertex $v_i$.
Note that all other non-skeleton vertices adjacent to $v_i$ have already been contracted to skeleton neighbors of $v_i$
before the $i$-th step.
Let $H_2$ be the resulting graph, which coincides with the skeleton of $G_k$.
Observe that throughout the whole process of contractions that transforms $H_1$ to $H_2$,
each of the skeleton vertices has at most six red neighbors and
each non-skeleton vertex adjacent to $v_i$ in the $i$-th step
has degree at most seven (the vertex $v_i$ and its at most six skeleton neighbors), and
so its red degree is at most seven.

Note that, since $H_2$ is the skeleton of $G_k$, the maximum degree of $H_2$ is six.
Lemma~\ref{lm:contract7} implies that there exists a sequence of edge-contractions that
eventually reduces $H_2$ to a single vertex while all graphs obtained during the process have maximum degree at most seven.
Hence, we conclude that the twin-width of the graph $G_k$ is at most seven.
\end{proof}

\subsection{Lower bound}

We now proceed with showing that the twin-width of $G_k$ is equal to seven for $k\ge\lbound$.
To do so, we will present a series of lemmas,
which concern graphs that can be obtained by a sequence of $6$-contractions from the graph $G_k$.
We start with a lemma that states that all initial contractions of skeleton vertices involve adjacent vertices.
Note that the conditions of the statement of the lemma are satisfied by the graph $G_k$ in particular;
they are also satisfied after a limited number of contractions of vertices of $G_k$.

\begin{lemma}
\label{lm:cont-non}
Let $H$ be a graph obtained by a sequence of semiplanar $6$-contractions from $G_k$ for $k\ge\lbound$.
If the skeleton of $H$ has minimum degree five,
every pair of $5$-vertices of the skeleton has at most one common skeleton neighbor and
the skeleton of $H$ has no separating $(\le 4)$-cycles,
then there is no $6$-contraction of skeleton vertices of $H$ that are not adjacent in the skeleton.
\end{lemma}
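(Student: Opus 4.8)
The plan is to argue by contradiction. Suppose $x$ and $y$ are skeleton vertices of $H$ that are not adjacent in the skeleton of $H$, and that contracting them is a $6$-contraction; I will show that the resulting vertex $w$ has red degree at least seven. Let $S_x$ and $S_y$ denote the neighbourhoods of $x$ and $y$ in the skeleton of $H$. Since the skeleton has minimum degree five, $|S_x|,|S_y|\ge 5$; since $x$ and $y$ are non-adjacent in the skeleton, $x,y\notin S_x\cup S_y$; and $S_x\subseteq N_H(x)$, $S_y\subseteq N_H(y)$, as every edge of the skeleton is an edge of $H$.

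First I would record a purely planar fact: in a plane triangulation with no separating $(\le 4)$-cycle, two non-adjacent vertices have at most two common neighbours. Given three common neighbours $a,b,c$ of non-adjacent vertices $u,v$: if $abc$ is a $3$-cycle, then $u$ and $v$ must lie on opposite sides of it, for if both lay on the same side then the three edges joining, say, $u$ to $a,b,c$ would partition that side into regions each incident to only two of $a,b,c$, forcing $v$ into one such region and contradicting that $v$ is adjacent to all of $a,b,c$; hence $abc$ is a separating $3$-cycle. If instead some pair, say $ab$, is a non-edge, then $uavb$ is a $4$-cycle with $c$ strictly on one side of it, and the other side cannot be a triangulated region free of interior vertices (its only possible diagonals, $uv$ and $ab$, are both absent), so it contains a vertex and $uavb$ is a separating $4$-cycle. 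Since the skeleton of $H$ is a plane triangulation (semiplanar contractions keep it one) with no separating $(\le 4)$-cycles, this gives $|S_x\cap S_y|\le 2$.

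Next I would pin down the common non-red neighbours of $x$ and $y$ in $H$, as precisely these fail to become red neighbours of $w$. Here I would use the routine fact (proved by induction on the number of contractions) that an edge of a contraction of $G_k$ is non-red exactly when every vertex of $G_k$ merged into one endpoint is adjacent in $G_k$ to every vertex merged into the other endpoint. Fix skeleton vertices $x_0,y_0$ of $G_k$ merged into $x$ and $y$ respectively; since $x$ and $y$ are non-adjacent in the skeleton of $H$, $x_0\not\sim y_0$ in $G_k$. If $t$ is a common non-red neighbour of $x$ and $y$ in $H$, then every vertex of $G_k$ merged into $t$ is adjacent to both $x_0$ and $y_0$; but a vertex of $G_k$ adjacent to two non-adjacent skeleton vertices is itself a skeleton vertex, because each non-skeleton vertex of $G_k$ lies inside a single triangular skeleton face and its skeleton neighbours lie among the pairwise-adjacent corners of that face. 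Hence $t$ is a skeleton vertex of $H$, and applying the fact again, $t$ is adjacent to both $x$ and $y$ in the skeleton of $H$. Thus every common non-red neighbour of $x$ and $y$ lies in $S_x\cap S_y$.

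Combining the two steps, every vertex of $\bigl(N_H(x)\cup N_H(y)\bigr)\setminus\{x,y\}$ that does not lie in $S_x\cap S_y$ is a red neighbour of $w$, so the red degree of $w$ is at least $|S_x\cup S_y|-|S_x\cap S_y|=|S_x|+|S_y|-2|S_x\cap S_y|$. If $|S_x\cap S_y|\le 1$ this is at least $8$; if $|S_x\cap S_y|=2$ and one of $x,y$ has skeleton degree at least six it is at least $7$; and if $|S_x\cap S_y|=2$ with $|S_x|=|S_y|=5$, then $x$ and $y$ are $5$-vertices of the skeleton with two common skeleton neighbours, contradicting the hypothesis. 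In every case the red degree of $w$ exceeds six, the desired contradiction. I expect the delicate point to be the third paragraph: the skeleton degree of a vertex may be strictly smaller than its number of skeleton neighbours, so a common skeleton neighbour of $x$ and $y$ need not be a common neighbour in the skeleton, and it is only by descending to $G_k$ through the non-red/adjacency correspondence that one rules out such ``spurious'' common non-red neighbours. The planar fact in the second paragraph, by contrast, is routine.
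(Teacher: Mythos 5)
Your proof is correct and follows the same approach as the paper's: bound the red degree of the contracted vertex from below by $|S_x|+|S_y|-2|S_x\cap S_y|$, using the absence of separating $(\le 4)$-cycles to get $|S_x\cap S_y|\le 2$ and the hypothesis on $5$-vertices to get $|S_x\cap S_y|\le 1$ when both skeleton degrees are five. The paper states this count in one line; you additionally spell out the two facts it leaves implicit (the planar bound on common neighbours, and that every common non-red neighbour must lie in $S_x\cap S_y$), which is a legitimate filling-in of detail rather than a different argument.
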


\begin{proof}
Let $v_1$ and $v_2$ be two non-adjacent vertices of the skeleton of $H$.
If at least one of the vertices $v_1$ and $v_2$ has skeleton degree six,
then the vertex obtained by their contraction has red degree at least $5+6-4=7$ (as they have at most two common skeleton neighbors  and the skeleton of $H$ has no separating $(\le 4)$-cycle).
If both vertices $v_1$ and $v_2$ have skeleton degree five,
then the vertex obtained by their contraction has red degree at least $5+5-2=8$ (as they are not adjacent and
have at most one common skeleton neighbor).
We conclude that there is no $6$-contraction of vertices of the skeleton of $H$ that are not adjacent in the skeleton.
\end{proof}

The next lemma excludes the existence of $6$-contractions of adjacent $6$-vertices of the skeleton,
again under the assumptions satisfied by $G_k$ and by graphs obtained by a limited number of contractions from $G_k$.

\begin{lemma}
\label{lm:cont66}
Let $H$ be a graph obtained by a sequence of semiplanar $6$-contractions from $G_k$ for $k\ge\lbound$.
Assume that the skeleton of $H$ has minimum degree five and has no two adjacent $5$-vertices.
Let $v_1$ and $v_2$ be two adjacent $6$-vertices of the skeleton of $H$ such that
$v_1$ and $v_2$ have exactly two common neighbors in the skeleton and
at most one of their (not necessarily common) skeleton neighbors has skeleton degree five.
The contraction of $v_1$ and $v_2$ results in a vertex of red degree at least seven.
\end{lemma}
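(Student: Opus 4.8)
The plan is to analyze the trigraph structure around $v_1$ and $v_2$ carefully and count, after the contraction, how many neighbors of the resulting vertex $w$ must be joined to $w$ by a red edge. Let me set up notation: since $v_1$ and $v_2$ are adjacent $6$-vertices of the skeleton with exactly two common skeleton neighbors, the skeleton neighborhood of $\{v_1,v_2\}$ other than $v_1,v_2$ themselves consists of $6+6-2-2 = 8$ distinct skeleton vertices, of which two, say $a$ and $b$, are common (and since the skeleton is a triangulation, $a$ and $b$ are the two vertices completing the faces on the two sides of the edge $v_1v_2$). After contracting $v_1$ and $v_2$ into $w$, the vertex $w$ is adjacent in the skeleton to these $8$ vertices. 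I would first argue that each of the six vertices adjacent to exactly one of $v_1,v_2$ in the skeleton becomes a red neighbor of $w$ (it is not adjacent to both $v_1$ and $v_2$), so immediately $w$ has red skeleton-degree at least $6$, and the only possible non-red neighbors among the $8$ are $a$ and $b$.

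Next I would show that at least one of $a$ and $b$ is also forced to be a red neighbor of $w$, which gives red degree at least $7$ and finishes the proof. The edge $wa$ fails to be red only if $G_k$'s descendant $H$ already contains both edges $v_1a$ and $v_2a$ as non-red edges of the skeleton; the same for $b$. So the remaining case to rule out is that all four edges $v_1a$, $v_2a$, $v_1b$, $v_2b$ are present and non-red in $H$. Here I would invoke the hypothesis that at most one of the skeleton neighbors of $v_1$ or $v_2$ has skeleton degree five, together with the fact that the skeleton has no two adjacent $5$-vertices and minimum degree five. I would count the red edges already incident to $v_1$ and to $v_2$ before the contraction: since $v_1$ has skeleton degree $6$ but trigraph degree (red plus black) subject to the $6$-contraction bound, and the non-skeleton contractions performed so far can only have added red edges, I would track how many of $v_1$'s six skeleton-incident trigraph edges can be black. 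The key quantitative input will be that for a vertex all of whose skeleton neighbors have skeleton degree six except possibly one, the neighborhood is "too rigid" to have been reached by $6$-contractions without leaving a red edge on one of the faces around $v_1v_2$ — i.e., one of the two triangular faces $v_1v_2a$ or $v_1v_2b$ must carry a red edge incident to $v_1$ or $v_2$, which then becomes a red edge incident to $w$ after contraction.

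More concretely, the mechanism I would use is the bookkeeping already implicit in the upper-bound proof: a $6$-vertex $u$ of the skeleton in a graph obtained by $6$-contractions can be incident to at most $6$ trigraph edges total (counting the non-skeleton blobs absorbed into its skeleton neighbors and into itself), so if the full "fan" of six faces around $u$ has all its non-skeleton material contracted away, the edges to the six skeleton neighbors are all present, but then whether they are red is governed by whether the non-skeleton vertex in the corresponding face was absorbed symmetrically from both endpoints. I would argue that around the edge $v_1v_2$, the two bounding faces $v_1v_2a$ and $v_1v_2b$ each contained non-skeleton vertices in $G_k$ whose eventual contraction into skeleton vertices forces a red edge on the boundary unless that face's material was split in a very specific way; and using that $v_1,v_2$ have (almost) all skeleton-degree-six neighbors, their red-degree budget of $6$ is already nearly consumed by the other faces, leaving no room to keep both $v_1v_2a$ and $v_1v_2b$ free of red edges incident to $v_1$ or $v_2$.

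The main obstacle, and the step I would spend the most care on, is this last counting argument: precisely tracking, across an arbitrary sequence of semiplanar $6$-contractions, which trigraph edges incident to $v_1$ and $v_2$ are red and showing the budget forces a red edge on face $v_1v_2a$ or $v_1v_2b$. The hypotheses "no two adjacent $5$-vertices," "minimum skeleton degree five," and "at most one skeleton neighbor of skeleton degree five" are exactly the local-structure conditions that pin down the degrees of $a$, $b$, and the other six vertices enough to make this budget count go through; I would organize the argument as a short case analysis on which (if any) of $a$, $b$, and the fan vertices is the single permitted $5$-vertex, and in each case exhibit a red edge incident to $w$ beyond the six already found.
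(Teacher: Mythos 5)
Your opening step is sound and matches the paper's: after contracting the adjacent $6$-vertices $v_1,v_2$ with exactly two common skeleton neighbors, the six non-common skeleton neighbors are automatically red, so the new vertex $w$ starts with red degree at least six and the only candidates for black skeleton neighbors are the two common neighbors. But from there the proposal has a genuine gap: the entire remaining argument is announced rather than carried out (you yourself flag the ``last counting argument'' as the main obstacle), and the mechanism you propose for it is not the one that works. You aim to show that one of the edges from $w$ to the two common neighbors must itself be red, by a red-degree ``budget'' count on $v_1$ and $v_2$. That target is too strong and the budget count does not force it: $v_1$ and $v_2$ can perfectly well have all their skeleton edges black in $H$ (in $G_k$ itself every skeleton edge is black), so nothing about their own red budgets rules out the configuration where all four edges to the common neighbors are black. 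The lemma's conclusion that ``the contraction results in a vertex of red degree at least seven'' is not established in the paper by exhibiting a seventh red neighbor of $w$; it is a proof by contradiction in which the offending vertex is typically \emph{not} $w$ but one of the skeleton vertices that absorbed non-skeleton material.

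The missing content is the analysis of the \emph{non-skeleton} vertices of $G_k$ living in the two triangular faces bounded by $v_1v_2$ (the inserted triangles $\alpha_T\beta_T\gamma_T$ and $\alpha_B\beta_B\gamma_B$) and in an adjacent quadrilateral face (the degree-$4$ vertex $x$). If $w$ is to have red degree at most six, each of these vertices must have been contracted into a skeleton neighbor of $v_1$ or $v_2$ other than $v_1$, $v_2$ and the two common neighbors; the hypothesis that at most one skeleton neighbor has skeleton degree five then forces $\beta_T$ and $\beta_B$ into the \emph{same} degree-$5$ vertex $b_T=b_B$ and pins $\gamma_T,\gamma_B$ to its two neighbors $c_T,c_B$, and finally $x$ is forced into $c_T$. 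The contradiction is then a \emph{timing} argument on the order of the contractions of $x$ and $\beta_T$: whichever happens first, the receiving vertex ($c_T$ or $b_T$) already exceeds red degree six at that earlier moment, contradicting that $H$ was produced by $6$-contractions. None of this tracking of where the face-interior vertices go, nor the ordering argument that closes the proof, appears in your proposal, and the local degree hypotheses are used to pin down these destinations rather than to bound the red budgets of $v_1$ and $v_2$ as you suggest.
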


\begin{proof}
Suppose that
the $6$-vertices $v_1$ and $v_2$ of the skeleton of $H$ that
have the properties given in the statement of the lemma
can be $6$-contracted, and
let $w$ be the vertex resulting from their contraction in $H$.
By Lemma~\ref{lm:cont-non}, the vertices $v_1$ and $v_2$ are adjacent in the skeleton of $H$, and
let $u_T$ and $u_B$ be the two common skeleton neighbors of $v_1$ and $v_2$.
Also see Figure~\ref{fig:cont66} for illustration of the notation.
Further,
let $\alpha_T$, $\beta_T$, $\gamma_T$ be the three non-skeleton vertices forming a triangle inside the face $v_1v_2u_T$ of the skeleton
such that $\alpha_T$ is a neighbor of $v_1$, $\beta_T$ is a neighbor of $v_2$ and $\gamma_T$ is a neighbor of $u_T$.
Similarly, 
let $\alpha_B$, $\beta_B$, $\gamma_B$ be the three non-skeleton vertices forming a triangle inside the face $v_1v_2u_B$
such that $\alpha_B$ is a neighbor of $v_1$, $\beta_B$ is a neighbor of $v_2$ and $\gamma_B$ is a neighbor of $u_B$.

\begin{figure}
\begin{center}
\epsfbox{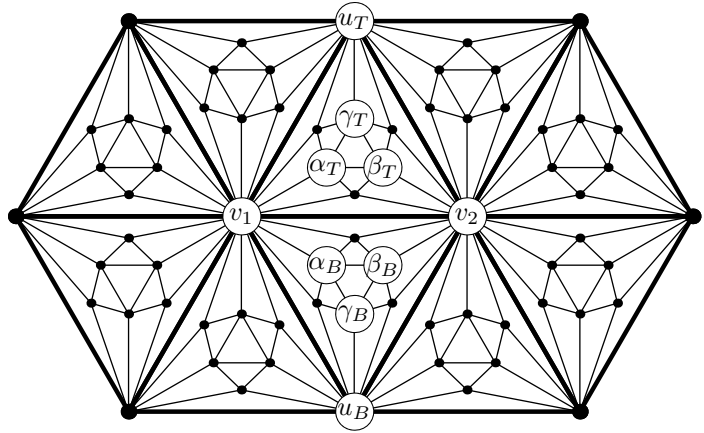}
\end{center}
\caption{Notation used in the proof of Lemma~\ref{lm:cont66}.}
\label{fig:cont66}
\end{figure}

We first establish that, in the graph $H$, all vertices $\alpha_T$, $\beta_T$, $\alpha_B$, $\beta_B$
have been contracted to a skeleton neighbor of $v_1$ or $v_2$ that
is different from any of the vertices $v_1$, $v_2$, $u_B$ and $u_T$.
By symmetry, it is enough to consider the vertex $\alpha_T$.
First observe that the vertex $w$ has eight skeleton neighbors and
the six neighbors different from $u_B$ and $u_T$ are red neighbors,
i.e., it cannot have any additional red neighbor.
If the vertex $\alpha_T$ was contracted to a skeleton vertex that is not a neighbor of $v_1$ or $v_2$, or
if the vertex $\alpha_T$ was contracted to $u_B$ or $u_T$,
then $w$ would have seven red skeleton neighbors.
Similarly,
if the vertex $\alpha_T$ was not contracted to a skeleton vertex,
then the neighbor of $w$ that is the vertex $\alpha_T$ or the vertex to which $\alpha_T$ has been contracted
would be a red neighbor of $w$ in addition to the six skeleton red neighbors of $w$.
Finally, if the the vertex $\alpha_T$ was contracted to $v_1$ or $v_2$,
then all skeleton neighbors of $w$ would be red and the red degree of $w$ would be at least eight.
Hence, the vertex $\alpha_T$ must have been contracted to a skeleton neighbor of $v_1$ or $v_2$ that
is different from any of the vertices $v_1$, $v_2$, $u_B$ and $u_T$.

Let $a_T$ and $b_T$ be the vertices of $H$ to which the vertices $\alpha_T$ and $\beta_T$ have been contracted, respectively.
If $a_T$ and $b_T$ are different non-adjacent vertices of the skeleton of $H$,
then at least one of the vertices $a_T$ and $b_T$ has skeleton degree six, and
this vertex has red degree at least seven when the vertices $v_1$ and $v_2$ have been contracted (its red neighbors
are all its six skeleton neighbors and the other of the vertices $a_T$ and $b_T$).
If follows that $a_T$ and $b_T$ are adjacent in the skeleton and
so they are skeleton neighbors of the same among the vertices $v_1$ and $v_2$;
by symmetry, we may assume that both $a_T$ and $b_T$ are skeleton neighbors of $v_1$.
If the skeleton degree of $b_T$ in $H$ is six,
then $b_T$ has at least seven red neighbors in $H$:
the six skeleton neighbors and the vertex $v_2$.
Hence, the skeleton degree of $b_T$ is five.

Similarly,
let $a_B$ and $b_B$ be the vertices of $H$ to which the vertices $\alpha_B$ and $\beta_B$ have been contracted, respectively.
Since there is only one skeleton neighbor of $v_1$ and $v_2$ with skeleton degree five,
we conclude that $b_B=b_T$ (in addition,
the vertex $a_B$ is either $b_B$ or a common skeleton neighbor of $b_B$ and $v_1$,
which is different from $u_B$ and $u_T$).

If the vertex $\gamma_T$ has not been contracted to a skeleton vertex of $H$,
then $b_T$ has red degree at least seven:
its red neighbors are the five skeleton neighbors, the vertex $v_2$ and
either the vertex $\gamma_T$ or the non-skeleton vertex to which it has been contracted to.
Hence, the vertex $\gamma_T$ has been contracted to a skeleton vertex of $H$ and let $c_T$ be this vertex.
Note that $c_T$ is neither $v_1$ nor $v_2$ (otherwise,
the vertex $u_B$ would be a red neighbor of $w$ and so the red degree of $w$ would be at least seven).
Similarly, $c_T$ is neither $u_T$ nor $u_B$ (otherwise, the red degree of $w$ would be at least seven).
Suppose that $c_T$ is not the vertex $b_T$.
This yields that $c_T$ is a skeleton neighbor of $b_T$ in $H$ (otherwise, $b_T$ would have red degree at least seven in $H$ as
the five skeleton neighbors of $b_T$, the vertex $v_2$ and $c_T$ would be its red neighbors),
which implies that the skeleton degree of $c_T$ is six.
Since the vertices $c_T$ and $u_T$ are different (as we have argued earlier),
$c_T$ is a skeleton neighbor of $u_T$ (otherwise, $c_T$ would have red degree at least seven in $H$ as
its six skeleton neighbors and the vertex $u_T$ would be its red neighbors).
We conclude that $c_T$ is either the vertex $b_T=b_B$ or a common skeleton neighbor of $b_T$ and $u_T$.

Along the same lines, we argue that the vertex $\gamma_B$ has been contracted to a skeleton vertex of $H$ and
this skeleton vertex $c_B$ is either the vertex $b_T=b_B$ or a common skeleton neighbor of $b_T$ and $u_B$.
It follows that $b_T=b_B$ is the skeleton neighbor of $v_1$ that is not a skeleton neighbor of $u_B$ or $u_T$,
$c_T$ is the unique common skeleton neighbor of $b_T$ and $u_T$, and
$c_B$ is the unique common skeleton neighbor of $b_T$ and $u_B$.
Also see Figure~\ref{fig:cont66b}.

\begin{figure}
\begin{center}
\epsfbox{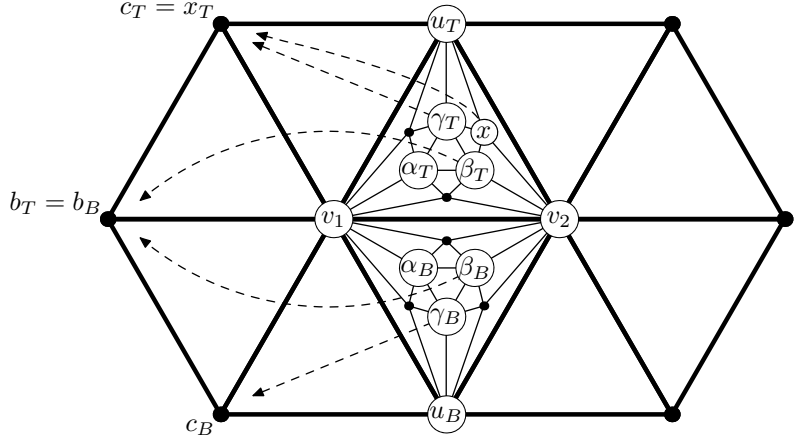}
\end{center}
\caption{Contractions of the non-skeleton vertices to the skeleton as established in the proof of Lemma~\ref{lm:cont66}.}
\label{fig:cont66b}
\end{figure}

Let $x$ be the common neighbor of $v_2$, $u_T$, $\beta_T$ and $\gamma_T$ in $G_k$.
If the vertex $x$ was not contracted to a skeleton vertex of $H$,
then the vertex $b_T=b_B$ would have red degree at least seven in $H$:
its red neighbors would be its five skeleton neighbors, the vertex $v_2$ and 
the vertex $x$ or the non-skeleton vertex to which the vertex $x$ has been contracted to.
Let $x_T$ be the skeleton vertex of $H$ to which the vertex $x$ has been contracted to.
The vertex $x_T$ is neither of the vertices $v_1$, $v_2$, $u_T$ or $u_B$ (otherwise,
the vertex $w$ would have red degree at least seven).
The vertex $x_T$ is not the vertex $b_T=b_B$ (otherwise, $b_T$ would have red degree at least seven as
its red neighbors would be its five skeleton neighbors, the vertex $v_2$ and the vertex $u_T$).
However, the vertex $x_T$ is a skeleton neighbor of $b_T=b_B$ (otherwise, $b_T$ would have red degree at least seven as
its red neighbors would be its five skeleton neighbors, the vertex $v_2$ and the vertex $x_T$) and
it is also a skeleton neighbor of $v_1$ or $v_2$ (otherwise, the vertex $w$ would have red degree at least seven).
Since the vertex $x_T$ cannot be the vertex $c_B$ (otherwise, $c_B$ would have red degree at least seven as
its red neighbors would be its five skeleton neighbors different from $u_B$, $v_2$ and $u_T$),
the vertex $x_T$ must be the vertex $c_T$.

We next distinguish whether the vertex $x$ was contracted to the vertex $c_T$ before or after
the vertex $\beta_T$ was contracted to $b_T$.
If the vertex $x$ was contracted to $c_T$ before the vertex $\beta_T$ was contracted to $b_T$,
the red degree of $c_T$ would be at least seven at that point:
the red neighbors of $c_T$ would be its six skeleton neighbors possibly except for $u_T$,
the vertex $v_2$ and the vertex $\beta_T$ or the vertex to which $\beta_T$ had been contracted.
If the vertex $\beta_T$ was contracted to $b_T$ before the vertex $x$ was contracted to $c_T$,
the red degree of $b_T$ would be at least seven at that point:
the red neighbors of $b_T$ would be its five skeleton neighbors,
the vertex $v_2$ and the vertex $x$ or the vertex to which $x$ had been contracted.
Hence, neither of the two cases can apply, and
we conclude that a sequence of semiplanar $6$-contractions of $G_k$
cannot result in a graph $H$ with the properties given in the statement of the lemma such that
the vertices $v_1$ and $v_2$ can be $6$-contracted.
\end{proof}

The next lemma describes when an adjacent $5$-vertex and $6$-vertex of the skeleton can be $6$-contracted,
again under the assumptions satisfied by $G_k$ and by graphs obtained by a limited number of contractions from $G_k$.
The cases described in the statement of the lemma are illustrated in Figure~\ref{fig:cont56-statement}.

\begin{figure}
\begin{center}
\epsfbox{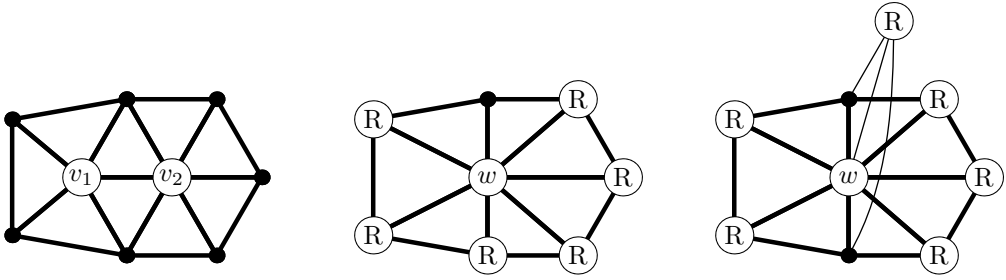}
\end{center}
\caption{The skeleton of the graph $H$ and the two possible outcomes described in the statement of Lemma~\ref{lm:cont56};
         the red neighbors of the newly created vertex $w$ are marked by R.}
\label{fig:cont56-statement}
\end{figure}

\begin{lemma}
\label{lm:cont56}
Let $H$ be a graph obtained by a sequence of semiplanar $6$-contractions from $G_k$ for $k\ge\lbound$.
Assume that the skeleton of $H$ has minimum degree five and has no separating $3$-cycles.
Let $v_1$ and $v_2$ be an adjacent $5$-vertex and $6$-vertex of the skeleton such that
each vertex at distance at most two from $v_1$ and $v_2$ in the skeleton of $H$ is a $(\ge 6)$-vertex.
If it is possible to $6$-contract $v_1$ and $v_2$,
then the resulting vertex $w$ has red degree six and exactly one of the following applies:
\begin{itemize}
\item the red neighbors of $w$ are six skeleton neighbors of $v_1$ and $v_2$,
      one of the two common skeleton neighbors of $v_1$ and $v_2$ is not a red neighbor of $w$, and
      the other common skeleton neighbor of $v_1$ and $v_2$, which we call $u_0$, satisfies that
      each of its skeleton neighbors is its red neighbor,
\item the red neighbors of $w$ are the five skeleton neighbors of $v_1$ and $v_2$ that are not common skeleton neighbors of $v_1$ and $v_2$, and
      a non-skeleton vertex $u_0$ that is adjacent to $w$ and the two common neighbors of $v_1$ and $v_2$ in the skeleton of $H$, and
      all neighbors of $u_0$ in $H$ are red.
\end{itemize}
\end{lemma}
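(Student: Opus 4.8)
The plan is to mirror the structure of the proof of Lemma~\ref{lm:cont66}, tracking which non-skeleton vertices inside the faces incident to $v_1$ and $v_2$ have been contracted where, but now exploiting the fact that $v_1$ is a $5$-vertex so that the vertex $w$ created by contracting $v_1$ and $v_2$ has only seven skeleton neighbors rather than eight. Let $u_1,u_2$ be the two common skeleton neighbors of $v_1$ and $v_2$ (there are exactly two since the skeleton is a triangulation with no separating $3$-cycle), and let $\alpha_i,\beta_i,\gamma_i$ (for $i=1,2$) denote the triangle of non-skeleton vertices inside the face $v_1v_2u_i$, with $\alpha_i$ adjacent to $v_1$, $\beta_i$ adjacent to $v_2$ and $\gamma_i$ adjacent to $u_i$. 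Since $w$ has five skeleton neighbors outside $\{u_1,u_2\}$ and each of those is red, and since a $6$-contraction allows red degree at most six, $w$ can have at most one further red neighbor; in particular at most one of $u_1,u_2$ is red, and at most one extra (non-skeleton) red neighbor is permitted.

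First I would argue, exactly as in Lemma~\ref{lm:cont66}, that each of $\alpha_1,\alpha_2,\beta_1,\beta_2$ must have been contracted into a skeleton neighbor of $v_1$ or $v_2$ other than $v_1,v_2,u_1,u_2$: if any of them were contracted into a non-neighbor, into $v_1$ or $v_2$, into $u_1$ or $u_2$, or not contracted to a skeleton vertex at all, one gets either a seventh red skeleton neighbor of $w$ or a second extra red neighbor, contradicting the red-degree bound (here the hypothesis that vertices near $v_1,v_2$ are $(\ge 6)$-vertices is used to force the relevant contractions to have happened). Next, using that the only candidate non-common skeleton neighbors are $(\ge 6)$-vertices and that contracting two non-adjacent skeleton vertices, or an adjacent $(\ge 6)$-pair sharing too few common neighbors, already overflows red degree (Lemmas~\ref{lm:cont-non} and~\ref{lm:cont66}, or direct degree counting), I would pin down $\alpha_i,\beta_i$ as being contracted into specific skeleton neighbors and then track $\gamma_1,\gamma_2$ and the "diagonal" non-skeleton vertices of the four-faces as in Lemma~\ref{lm:cont66}. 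The case analysis splits according to whether the single admissible extra red neighbor of $w$ is one of $u_1,u_2$ (giving the first bullet, where the red $u_0\in\{u_1,u_2\}$ is forced to have absorbed enough of the surrounding non-skeleton vertices that all its skeleton neighbors became red) or a non-skeleton vertex still present in $H$ (giving the second bullet, where $u_0$ is adjacent to $w$ and to both $u_1,u_2$, and all of its neighbors are red because it lies in the "contracted region" around the face $v_1v_2$).

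The main obstacle I anticipate is the bookkeeping needed to show these are the \emph{only} two possibilities and that they are mutually exclusive: one has to rule out mixed configurations where, say, $u_1$ is red \emph{and} some non-skeleton vertex near $u_2$ is also still red, or where a $\gamma_i$ has been contracted into a skeleton vertex that forces a seventh red edge onto an intermediate vertex \emph{at some earlier moment} of the contraction sequence rather than in the final graph $H$. As in Lemma~\ref{lm:cont66}, the resolution is to argue about the red degree of auxiliary vertices (the targets $b_i,c_i$ of the contractions of $\beta_i,\gamma_i$, and the target of the four-face vertex adjacent to $v_2,u_i,\beta_i,\gamma_i$) \emph{at the time} of each relevant contraction, and to play off "$\beta_i$ contracted before $x$" against "$x$ contracted before $\beta_i$" to derive a contradiction in every branch except the two listed. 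The planarity/semiplanarity hypothesis and "no separating $3$-cycles" are what guarantee the local picture around $v_1v_2u_1u_2$ is the expected one (two common neighbors, the four faces with their triangles and diagonal vertices), so once that local structure is fixed the argument is a finite, if tedious, enumeration of red-degree inequalities.
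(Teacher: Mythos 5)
There is a genuine gap, and it sits exactly where the lemma differs from Lemma~\ref{lm:cont66}. Your opening step asserts that, ``exactly as in Lemma~\ref{lm:cont66}'', each of $\alpha_1,\alpha_2,\beta_1,\beta_2$ must have been contracted into a skeleton neighbor of $v_1$ or $v_2$. That claim is false here, and in fact it would rule out the second bullet of the very statement you are proving: in that case none of the four vertices ends up in the skeleton --- all four are contracted to one and the same non-skeleton vertex $u_0$, which survives in $H$ as the single admissible extra red neighbor of $w$. The argument from Lemma~\ref{lm:cont66} does not transfer because there $w$ has six red skeleton neighbors from the outset and so admits no further red neighbor, whereas here $w$ has only five, leaving a budget of one. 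The correct top-level dichotomy (which you do name later, creating an internal inconsistency with your first step) is on how that one-unit budget is spent: either it is spent on a common skeleton neighbor $u_0\in\{u_1,u_2\}$, and only then are the $\alpha,\beta$ in the face of the \emph{non-red} common neighbor forced into skeleton neighbors (with at least one of them forced into $u_0$ itself, which is what makes all skeleton neighbors of $u_0$ red); or it is spent on a non-skeleton vertex, and then one must show $a_T=b_T=a_B=b_B=u_0$ with $u_0$ non-skeleton and not a skeleton neighbor of $w$.

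The second, harder half of that dichotomy is where your sketch offers no workable mechanism. Saying that all neighbors of $u_0$ are red ``because it lies in the contracted region'' is not an argument; the paper derives it from the fact that $\alpha_T,\beta_T,\alpha_B,\beta_B$ have no common neighbor in $G_k$, so once they are identified every incident edge is red. More seriously, the claim that $u_0$ is adjacent to both $u_1$ and $u_2$ --- which is essential for the statement and for its later use in Lemma~\ref{lm:cont556} --- requires a separate, delicate argument: one tracks the targets $z_A,z_B$ of the two four-face vertices adjacent to $u_T$, pins them down as the two remaining common neighbors of $u_T$ with $v_1$ and $v_2$ respectively, and then shows that $\gamma_T$ can only have been contracted to $u_T$ itself (any other skeleton target, or a non-skeleton target, overloads $z_A$, $z_B$ or $c_T$ with seven red neighbors). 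None of this appears in your plan, and it is not a routine instance of the ``play off the order of two contractions'' device you invoke; that device is the engine of Lemma~\ref{lm:cont66} and of Lemma~\ref{lm:cont556}, but the key steps here are static red-degree counts in $H$ at the moment $v_1$ and $v_2$ are contracted.
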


\begin{proof}
Let $u_T$ and $u_B$ denote the two common skeleton neighbors of $v_1$ and $v_2$;
$\alpha_T$, $\beta_T$, $\gamma_T$ the three non-skeleton vertices forming a triangle inside the face $v_1v_2u_T$ of the skeleton
such that $\alpha_T$ is a neighbor of $v_1$, $\beta_T$ is a neighbor of $v_2$ and $\gamma_T$ is a neighbor of $u_T$, and
$\alpha_B$, $\beta_B$, $\gamma_B$ the three non-skeleton vertices forming a triangle inside the face $v_1v_2u_B$
such that $\alpha_B$ is a neighbor of $v_1$, $\beta_B$ is a neighbor of $v_2$ and $\gamma_B$ is a neighbor of $u_B$.
Also see Figure~\ref{fig:cont56} for illustration of the notation.
Suppose that the contraction of $v_1$ and $v_2$ in $H$ is a $6$-contraction, and
let $w$ be the vertex obtained by contracting $v_1$ and $v_2$ (as in the statement of the lemma).

\begin{figure}
\begin{center}
\epsfbox{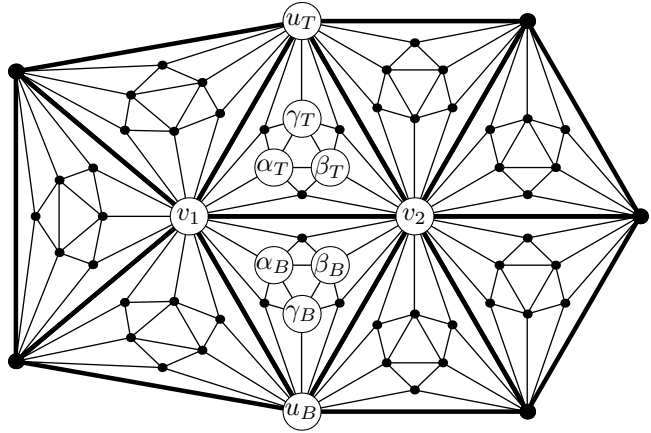}
\end{center}
\caption{Notation used in the proof of Lemma~\ref{lm:cont56}.}
\label{fig:cont56}
\end{figure}

Observe that the five skeleton neighbors of $w$ different from $u_T$ and $u_B$ are red neighbors of $w$.
We first consider the case that one of the vertices $u_T$ and $u_B$ is a red neighbor of $w$.
By symmetry, we may assume that $u_B$ is a red neighbor of $w$ and so $u_T$ is not.
The vertex $u_B$ will be the vertex $u_0$ from the first case described in the statement of the lemma.
Since $w$ has six red skeleton neighbors,
both vertices $\alpha_T$ and $\beta_T$ have been contracted to skeleton neighbors of $w$ different from $u_T$.
Let $a_T$ and $b_T$ be the skeleton vertices to which $\alpha_T$ and $\beta_T$ have been contracted, respectively.
Note that $a_T$ is a neighbor of $v_1$ (otherwise, $a_T$ would be neighbor of $v_2$ different from $u_B$ and $u_T$ and
so $a_T$ would have have red degree seven in $H$,
in particular, the red neighbors of $a_T$ would be $v_1$ and the six skeleton neighbors of $a_T$) and,
similarly, $b_T$ is a neighbor of $v_2$.
If $a_T$ and $b_T$ are different vertices that are not adjacent in the skeleton of $H$,
then one of them, say $a_T$, is not the vertex $u_B$, and
the red degree of $a_T$ after the contraction of $v_1$ and $v_2$ is seven (all its six skeleton neighbors and
the vertex $b_T$ are its red neighbors).
Hence, either $a_T$ and $b_T$ are the same vertex and this vertex is $u_B$, or
the vertices $a_T$ and $b_T$ are adjacent in the skeleton of $H$ and so one of them is the vertex $u_B$ (as
the vertex $a_T$ is a neighbor of $v_1$ and $b_T$ is a neighbor of $v_2$).
In either of the cases,
at least one of the vertices $\alpha_T$ and $\beta_T$ has been contracted to $u_B$ and
all five skeleton neighbors of the vertex $u_B$
are red neighbors of $u_B$ after the contraction of $v_1$ and $v_2$ (as required by the first case in the statement of the lemma).

We next consider the case that neither of the vertices $u_T$ and $u_B$ is a red neighbor of $w$.
Let $a_T$, $b_T$ and $c_T$ be the vertices that
$\alpha_T$, $\beta_T$ and $\gamma_T$ have been contracted to in $H$ (possibly,
$a_T$ is $\alpha_T$, $b_T$ is $\beta_T$, and $c_T$ is $\gamma_T$).
Similarly,
let $a_B$, $b_B$ and $c_B$ be the vertices that
$\alpha_B$, $\beta_B$ and $\gamma_B$ have been contracted to in $H$.
Observe that none of the vertices $a_T$, $b_T$, $c_T$, $a_B$, $b_B$ and $c_B$
is one of the vertices $v_1$, $v_2$, $u_T$ and $u_B$ (otherwise, $w$ would have $u_T$ or $u_B$ as a red skeleton neighbor).

Suppose that $a_T$ is a skeleton neighbor of $w$.
Hence, $a_T$ must be a skeleton neighbor of $v_1$ in $H$ (otherwise, $a_T$ would have red degree seven in $H$).
If $b_T$ is a skeleton neighbor of $a_T$, then $b_T$ has red degree at least seven in $H$, and
if $b_T$ is not a skeleton neighbor of $a_T$, then $a_T$ has red degree at least seven after contracting $v_1$ and $v_2$.
Therefore, $a_T$ is not a skeleton neighbor of $w$.
Along the same lines,
we conclude that neither of the vertices $b_T$, $a_B$ and $b_B$ is a skeleton neighbor of $w$.

If any two of the vertices $a_T$, $b_T$, $a_B$ and $b_B$ are distinct,
then $w$ has red degree seven:
there are five red skeleton neighbors of $w$ and
the two distinct vertices among $a_T$, $b_T$, $a_B$ and $b_B$ are also red neighbors of $w$.
It follows that $a_T=b_T=a_B=b_B$ and we denote this vertex $u_0$ in the rest of the proof.
Note that we have established that $u_0$ is not a neighbor of $v_1$ or $v_2$ in the skeleton of $H$.
If $u_0$ is a skeleton vertex of $H$, then $u_0$ has red degree at least seven:
its red neighbors are its at least five skeleton neighbors and
the vertices $v_1$ and $v_2$ are also its red neighbors.
Hence, $u_0$ is not a skeleton vertex.
Since the vertices $\alpha_T$, $\beta_T$, $\alpha_B$ and $\beta_B$ do not have any common neighbor,
\emph{all neighbors of the vertex $u_0$ in $H$ are red}.

We next show that the vertex $u_0$ is adjacent to the vertex $u_T$ in $H$.
Suppose that $u_0$ is not adjacent to the vertex $u_T$.
Let $z_A$ be the vertex of $H$ to which the common neighbor of $v_1$, $u_T$, $\alpha_T$ and $\gamma_T$ has been contracted in $H$, and
let $z_B$ be the vertex of $H$ to which the common neighbor of $v_2$, $u_T$, $\beta_T$ and $\gamma_T$ has been contracted in $H$.

If $z_A$ is not a skeleton vertex of $H$,
then $z_A$ is the vertex $u_0$ (otherwise, $w$ would have red degree at least seven), and
so $u_0$ is adjacent to $u_T$.
Hence, $z_A$ is a skeleton vertex of $H$.
Note that $z_A$ cannot be one of the vertices $v_1$ and $v_2$ as $u_B$ is not a red neighbor of $w$.
If $z_A$ is the vertex $u_T$, then $u_0$ is adjacent to $u_T$ as desired.
So, we can assume that $z_A$ is none of the vertices $v_1$, $v_2$ and $u_T$.
If $z_A$ is not a neighbor of $u_T$ in the skeleton of $H$,
then the red degree of $z_A$ in $H$ is at least seven:
the red neighbors of $z_A$ are the at least five neighbors of $z_A$ in the skeleton of $H$,
the vertex $u_T$ and the vertex $u_0$.
If $z_A$ is not a neighbor of $v_1$ in the skeleton of $H$,
then the red degree of $z_A$ in $H$ is at least seven:
the red neighbors of $z_A$ are the at least five neighbors of $z_A$ in the skeleton of $H$,
the vertex $v_1$ and the vertex $u_0$.
It follows that $z_A$ is the common neighbor of $v_1$ and $u_T$ in the skeleton of $H$ that is different from $v_2$.

A symmetric argument yields that
if $u_0$ is not adjacent to $u_T$,
then $z_B$ is the common neighbor of $v_2$ and $u_T$ in the skeleton of $H$ that is different from $v_1$.
See Figure~\ref{fig:cont56z}.

\begin{figure}
\begin{center}
\epsfbox{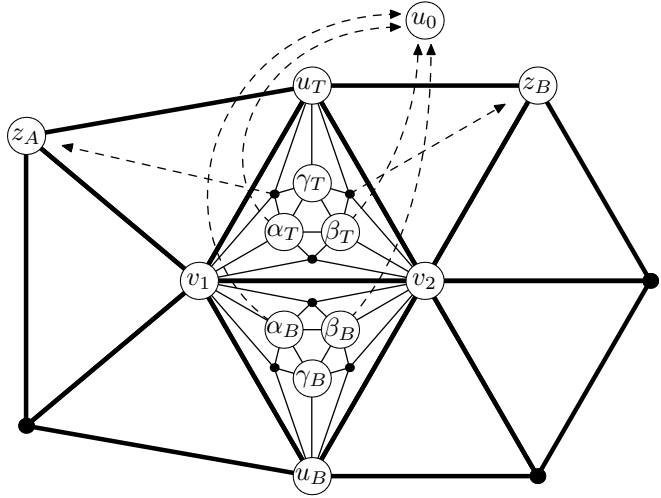}
\end{center}
\caption{Illustration of the notation used in the analysis presented in the proof of Lemma~\ref{lm:cont56}.}
\label{fig:cont56z}
\end{figure}

We now consider which of the vertices of $H$ is the vertex $c_T$,
i.e., the vertex to which $\gamma_T$ has been contracted in $H$.
If $c_T$ is not a skeleton vertex, then $z_A$ has red degree at least seven after the contraction of $v_1$ and $v_2$:
the red neighbors of $z_A$ are its skeleton neighbors different from $u_T$ and the vertices $u_0$ and $c_T$ (note that
$c_T$ cannot be the vertex $u_0$ as otherwise $u_0$ is adjacent to $u_T$ as desired).
Hence, the vertex $c_T$ is one of the skeleton vertices of $H$.
Note that $c_T$ is neither $v_1$ and $v_2$ as $u_B$ is not a red neighbor of $w$.
If $c_T$ is a vertex of the skeleton of $H$ that is not a common neighbor of $z_A$ and $z_B$ in the skeleton,
i.e, $c_T$ is a skeleton vertex different from $u_T$,
then the red degree of $c_T$ is at least seven:
its red neighbors are at least five neighbors of $c_T$ in the skeleton of $H$ different from $u_T$ (note that
if $c_T$ has skeleton degree five, then $u_T$ is not its skeleton neighbor),
the vertex $u_0$, and the vertex $z_A$ or $z_B$ that is not a neighbor of $c_T$ in the skeleton.
Hence, the vertex $c_T$ is the vertex $u_T$ and so the vertex $u_0$ is adjacent to $u_T$ in $H$.

An analogous argument yields that the vertex $u_0$ is adjacent to $u_B$ in $H$,
which completes the analysis of the second case described in the statement of the lemma.
\end{proof}

The next lemma describes
when a $5$-vertex that is a common neighbor of $6$-contracted adjacent $5$-vertex and $6$-vertex of the skeleton can be $6$-contracted,
again under the assumptions satisfied by $G_k$ and by graphs obtained by a limited number of contractions from $G_k$.
We refer to Figure~\ref{fig:cont556-statement} for the illustration of the notation used in the statement of the next lemma.
Note that the statement of the lemma does not require $v'_1$ to be a $5$-vertex of the skeleton of $G_k$,
i.e., $v'_1$ could become a $5$-vertex of the skeleton by some $6$-contractions applied to $G_k$.

\begin{figure}
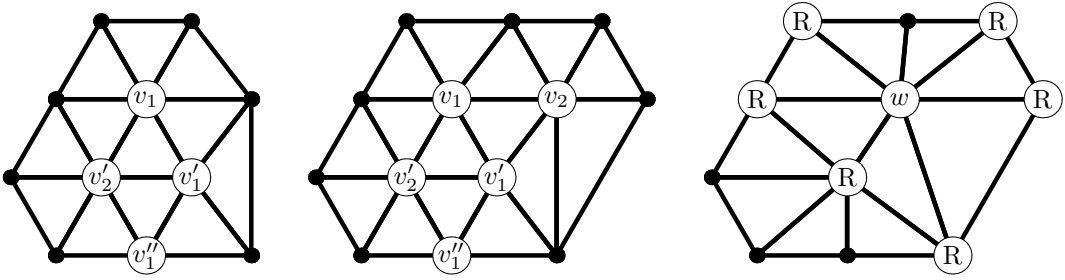

\begin{center}
\epsfbox{twin7-7.mps}
\hskip 5mm
\epsfbox{twin7-8.mps}
\hskip 5mm
\epsfbox{twin7-9.mps}
\end{center}
\caption{The notation used in the statement of Lemma~\ref{lm:cont556};
         the only possible configuration before the contraction of the vertices $v'_1$ and $v'_2$ to $w'$ and the vertices $v_1$ and $v_2$ to $w$, and
	 the configuration resulting from the contraction (the red neighbors of $w$ are depicted by R).}
\label{fig:cont556-statement}
\end{figure}

\begin{lemma}
\label{lm:cont556}
Let $H$ be a graph obtained by a sequence of semiplanar $6$-contractions from $G_k$ for $k\ge\lbound$.
Assume that the skeleton of $H$ has minimum degree five and has no separating $3$-cycles.
Let $v_1$ and $v_2$ be adjacent $5$-vertex and $6$-vertex of the skeleton of $H$ such that
each vertex at distance at most two from $v_1$ and $v_2$ in the skeleton of $H$ is a $6$-vertex
except for a vertex $w'$, which is a neighbor of $v_1$ and
which is a $7$-vertex of the skeleton obtained by contracting a $5$-vertex $v'_1$ and a $6$-vertex $v'_2$ of the skeleton, and
possibly except for a vertex $v''_1$ of the skeleton,
which was either a $6$-vertex or a $7$-vertex of the skeleton adjacent to both $v'_1$ and $v'_2$ before the contraction of $v'_1$ and $v'_2$.
In addition,
assume that no $6$-vertex that is a neighbor of $v_1$ and $v_2$ in the skeleton of $H$
has been obtained by a contraction of two or more skeleton vertices, and
the conclusion of Lemma~\ref{lm:cont56} is satisfied just after the contraction of the vertices $v'_1$ and $v'_2$
with respect to the contraction of these two vertices.

If it is possible to $6$-contract $v_1$ and $v_2$,
then
\begin{itemize}
\item $w'$ is a common neighbor of $v_1$ and $v_2$ in the skeleton of $H$,
\item the vertex $w'$ has red degree six and its red neighbors are all its skeleton neighbors except for $v_1$,
\item all skeleton neighbors of $v''_1$ are red,
\item the vertex $w$ resulting from the contraction of $v_1$ and $v_2$ has red degree six,
its red neighbors are its neighbors in the skeleton except the common neighbor of $v_1$ and $v_2$ different from $w'$, and
\item just before the contraction of $v'_1$ and $v'_2$, 
the vertex $v'_1$ was a common neighbor of $v_1$ and $v_2$ in the skeleton and
all its neighbors in the skeleton possibly except for $v_1$ were red.
\end{itemize}
In particular,
after the contraction of $v_1$ and $v_2$,
the first case in the conclusion of Lemma~\ref{lm:cont56} with respect to $v_1$ and $v_2$,
i.e., the red degree of $w$ is six and its only skeleton neighbor that is not red
is the common neighbor of $v_1$ and $v_2$ different from $w'$, and
all skeleton neighbors of $w'$ are red.
\end{lemma}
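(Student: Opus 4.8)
The plan is to bootstrap from Lemma~\ref{lm:cont56}, using the degree-seven skeleton vertex $w'$ as a rigid anchor that forces everything else into place. First I observe that the hypotheses of Lemma~\ref{lm:cont56} are met here: every vertex at distance at most two from $v_1$ and $v_2$ in the skeleton of $H$ is a $(\ge 6)$-vertex (all of them are $6$-vertices except $w'$ and possibly $v''_1$, which are $6$- or $7$-vertices), the skeleton has minimum degree five, and it has no separating $3$-cycles. Hence, assuming that $v_1$ and $v_2$ can be $6$-contracted, Lemma~\ref{lm:cont56} already gives that the resulting vertex $w$ has red degree exactly six and that one of its two configurations occurs. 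I then fix the notation used in the proofs of Lemmas~\ref{lm:cont66} and~\ref{lm:cont56}: the two common skeleton neighbours $u_T,u_B$ of $v_1$ and $v_2$, the non-skeleton triangles $\alpha_T\beta_T\gamma_T$ and $\alpha_B\beta_B\gamma_B$ lying in the skeleton faces $v_1v_2u_T$ and $v_1v_2u_B$, and the vertices of $H$ to which these six non-skeleton vertices have been contracted.

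The heart of the argument is the rigidity of $w'$. Since $H$ is reached from $G_k$ by a sequence of $6$-contractions, $w'$ has red degree at most six in $H$, and being a skeleton $7$-vertex it has at least one non-red skeleton edge and no room to acquire a further red neighbour. Feeding in the assumed conclusion of Lemma~\ref{lm:cont56} for the earlier contraction of $v'_1$ and $v'_2$ (just after which $w'$ had red degree six, with either a single non-red skeleton edge running to a common neighbour of $v'_1$ and $v'_2$, or the non-skeleton $u_0$ configuration), and using the hypothesis that the $6$-vertex neighbours of $v_1$ and $v_2$ are ordinary skeleton vertices — so that red incidences near them are neither created nor merged away by the intermediate contractions — I would show, by the familiar local red-degree accounting, that $w'$ is one of $u_T,u_B$, i.e.\ a common neighbour of $v_1$ and $v_2$; that the unique non-red skeleton edge at $w'$ is $w'v_1$, while $w'v_2$ and all other skeleton edges at $w'$ are red; and that the second (non-skeleton $u_0$) configuration of Lemma~\ref{lm:cont56} for $v_1,v_2$ cannot occur (in it neither $u_T$ nor $u_B$ is a red neighbour of $w$, but $w'v_2$ being red makes the common neighbour $w'$ such a neighbour). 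Thus the first configuration holds with $w'$ playing the role of $u_0$: $w$ has red degree six, its only non-red skeleton neighbour is the common neighbour $u_B$ distinct from $w'$, and all skeleton neighbours of $w'$ are red after the contraction of $v_1$ and $v_2$.

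It remains to reconstruct the recent history. The common neighbour of $v'_1$ and $v'_2$ that survived the contraction as a skeleton neighbour of $w'$ is $v''_1$; since all skeleton edges at $w'$ except $w'v_1$ are red and $v''_1$ is a $6$- or $7$-vertex one step from $v_1$ and $v_2$, a red-degree count at $v''_1$ after the contraction of $v_1$ and $v_2$ shows that every skeleton edge of $v''_1$ must be red, as otherwise $v''_1$ would reach red degree seven. Rewinding one contraction further, $v'_1$ was a $5$-vertex; tracing the red skeleton edges at $w'$ backwards through the contraction of $v'_1$ and $v'_2$ forces $v'_1$ to have been, just before that contraction, a common neighbour of $v_1$ and $v_2$ with all of its skeleton edges red except possibly $v'_1v_1$ (the edge that becomes the non-red edge $w'v_1$). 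Finally, the ``in particular'' assertion is exactly the first case of Lemma~\ref{lm:cont56} applied to $v_1$ and $v_2$ with $u_0=w'$, which is what the previous paragraph established.

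The step I expect to be the main obstacle is the middle one: one has to check that the red edges created when $w'$ was formed are still red in $H$ — that no intermediate contraction destroyed or shifted them — and one must eliminate every possibility for where the six non-skeleton vertices $\alpha_T,\beta_T,\gamma_T,\alpha_B,\beta_B,\gamma_B$ have been contracted, in each case verifying that any deviation from the claimed picture pushes some nearby $6$- or $7$-vertex past red degree six. This is the same exhaustive local accounting as in Lemmas~\ref{lm:cont66} and~\ref{lm:cont56}, complicated here by the need to argue about the contraction history of $w'$, $v''_1$, and hence $v'_1$, rather than read it off directly from $H$.
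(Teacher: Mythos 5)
Your overall plan follows the same route as the paper (exploit the rigidity of the $7$-vertex $w'$, reduce to the dichotomy of Lemma~\ref{lm:cont56}, kill the non-skeleton $u_0$ configuration, then rewind the history), but the proposal leaves the genuinely hard steps as promissory notes, and two of them do not go through as described. First, to even get started you must show that, of the three possibilities that the hypothesis ``the conclusion of Lemma~\ref{lm:cont56} held just after contracting $v'_1$ and $v'_2$'' leaves open for $H$, only the one where $w'$ has six red skeleton neighbors with $v_1$ the unique non-red one survives. Excluding the configuration in which a \emph{non-skeleton} vertex $u_0$ adjacent to $w'$, $v_1$ and $v''_1$ carried all the red edges requires a full case analysis of where $u_0$ has been contracted by the time we reach $H$ (to $w'$, to $v_1$, to $v''_1$, elsewhere, or not at all), each branch producing a vertex of red degree seven; ``the familiar local red-degree accounting'' does not name this analysis, and without it you cannot yet assert that $w'v_2$ is red, which is exactly what your exclusion of the second configuration of Lemma~\ref{lm:cont56} for $v_1,v_2$ relies on. Relatedly, your derivation of ``all skeleton neighbors of $v''_1$ are red'' from a red-degree count at $v''_1$ \emph{after} contracting $v_1$ and $v_2$ is backwards: that contraction need not touch $v''_1$ at all, and red-degree bounds can only show that edges are \emph{not} red or that contractions are impossible. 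The fact comes from the hypothesis itself (the first case of Lemma~\ref{lm:cont56} applied to $v'_1,v'_2$ makes $v''_1$ the all-red common neighbor, and redness persists under later contractions), not from a count in $H$.

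The most serious gap is the last bullet of the conclusion. In $H$ the vertices $v'_1$ and $v'_2$ have already been merged into $w'$, so no amount of ``tracing the red skeleton edges at $w'$ backwards'' can tell you which of the two was the common neighbor of $v_1$ and $v_2$, nor which one the non-skeleton vertex $\alpha$ (in the face $v_1v_2z$) was contracted to: the static red picture at $w'$ is compatible a priori with both. The paper resolves this only by a temporal argument: it first pins down that $\alpha$ was contracted to $w'$'s preimage and $\beta$ to the common neighbor $b$ of $w'$ and $v_2$, then proves the order ``$\alpha$ first, then $\beta$, then the contraction of $v'_1$ and $v'_2$,'' and finally, assuming the common neighbor were $v'_2$, tracks the fourth non-skeleton vertex $\omega$ (the common neighbor of $v_1$, $z$, $\alpha$ and $\gamma$ in $G_k$) and shows every possible destination and timing for $\omega$ forces red degree seven somewhere at some intermediate moment. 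Your proposal contains no counterpart to this order-of-contractions analysis, and without it the claim that it was the $5$-vertex $v'_1$ that sat between $v_1$ and $v_2$ with all skeleton neighbors except possibly $v_1$ red -- the very fact the main theorem needs for its final contradiction -- is unproved.
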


\begin{proof}
Since the conclusion of Lemma~\ref{lm:cont56} is satisfied just after the contraction of the vertices $v'_1$ and $v'_2$,
we obtain that at least one of the following holds for the graph $H$:
\begin{itemize}
\item all five skeleton neighbors of $v_1$ are red,
\item the red degree of $w'$ is six,
      $v_1$ is the only skeleton neighbor of $w'$ that is not a red neighbor of $w'$, and
      all skeleton neighbors of $v''_1$ are red, or
\item the red degree of $w'$ in the skeleton is at least five, and
      the graph when the vertex $w'$ was created contained a non-skeleton vertex $u_0$ that
      was adjacent to $w'$, $v_1$ and $v''_1$ and all neighbors of $u_0$ were red.
\end{itemize}
We next exclude the first and the third cases.
In the first case, the red degree of the vertex obtained by contracting $v_1$ and $v_2$
is at least $5+6-4=7$ as all neighbors of the contracted vertex in the skeleton are red.
Hence, the first case does not apply.

We now exclude the third case.
If the vertex $u_0$ has not been contracted to a skeleton vertex in $H$,
then the red degree of $w'$ after contracting the vertices $v_1$ and $v_2$ is at least seven
unless $v_2$ is a common neighbor of $v_1$ and $w'$ in the skeleton of $H$.
If the vertex $v_2$ is a common neighbor of $v_1$ and $w'$,
then the red degree of the vertex obtained by contracting $v_1$ and $v_2$ is at least seven as
the contracted vertex has six red skeleton neighbors (and $u_0$ is also a red neighbor of it).
Hence, the vertex $u_0$ has been contracted to a skeleton vertex of $H$.

If the vertex $u_0$ has been contracted to $w'$, then $w'$ has red degree seven in $H$.
If the vertex $u_0$ has been contracted to a skeleton vertex different from $v_1$ and $v''_1$,
then the skeleton vertex to which $u_0$ has been contracted has red degree seven (note that
$w'$ is the only common neighbor of $v_1$ and $v''_1$ in the skeleton of $H$).
If the vertex $u_0$ has been contracted to $v_1$,
then the contraction of $v_1$ and $v_2$ results in a vertex with red degree at least $5+6-4=7$ (as
all skeleton neighbors of the contracted vertex are red).
The only remaining case is that the vertex $u_0$ has been contracted to $v''_1$.
Unless $v_2$ is a common neighbor of $w'$ and $v_1$,
the red degree of $w'$ after contracting the vertices $v_1$ and $v_2$ is at least seven.
And if the vertex $v_2$ is a common neighbor of $v_1$ and $w'$,
then the red degree of the vertex obtained by contracting $v_1$ and $v_2$ is at least seven as
the contracted vertex has six red neighbors in the skeleton in addition to $v''_1$.

It now remains to analyze the second case.
The vertex $v_2$ must a common neighbor of $w'$ and $v_1$:
otherwise, the degree of $w'$ in the skeleton of $H$ does not change and
the vertex obtained by contracting $v_1$ and $v_2$ is a new red neighbor of $w'$ in the skeleton,
i.e., $w'$ has seven red neighbors in the skeleton of the graph obtained from $H$ by contracting $v_1$ and $v_2$.
Let $z$ be the common neighbor of $v_1$ and $v_2$ different from $w'$, and
let $\alpha$, $\beta$ and $\gamma$ be the non-skeleton vertices forming a triangle in the face of $G_k$
bounded by $v_1$, $v_2$ and $z$ such that
$\alpha$ is adjacent to $v_1$, $\beta$ to $v_2$ and $\gamma$ to $z$ (see Figure~\ref{fig:cont556}).

\begin{figure}
\begin{center}
\epsfbox{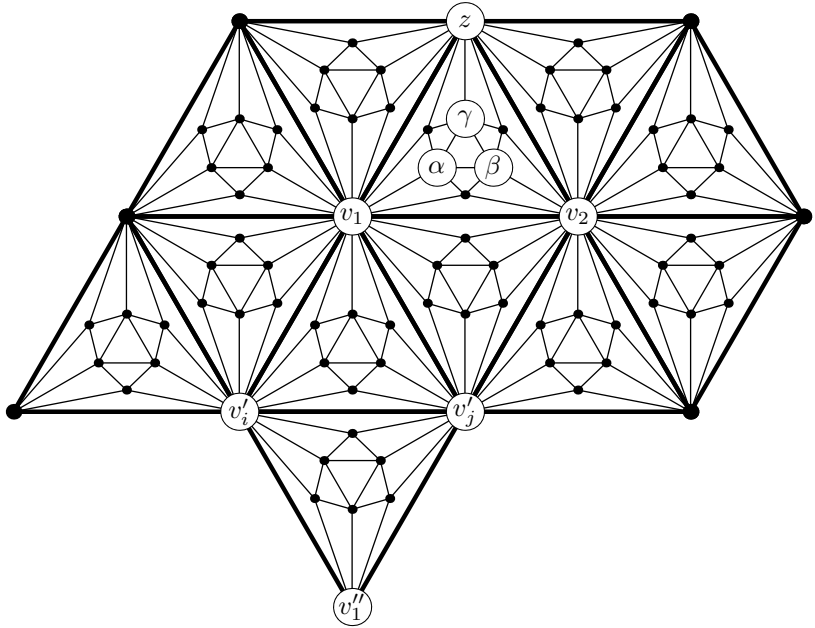}
\end{center}
\caption{Notation used in the proof of Lemma~\ref{lm:cont556} where $\{i,j\}=\{1,2\}$.}
\label{fig:cont556}
\end{figure}

Since $v_2$ is a red neighbor of $w'$,
the vertex obtained by contracting $v_1$ and $v_2$ in $H$ has six red neighbors in the skeleton (all
its neighbors in the skeleton except for $z$);
it follows that
the vertices $\alpha$ and $\beta$ have been contracted to one of the neighbors of $v_1$ and $v_2$ in the skeleton of $H$.
Let $a$ be the vertex to which $\alpha$ has been contracted and $b$ the vertex to which $\beta$ has been contracted.
The vertex $a$ is a neighbor of $v_1$ in the skeleton of $H$ (otherwise, $a$ has red degree at least seven in $H$),
similarly, $b$ is a neighbor of $v_2$ in the skeleton and
neither $a$ nor $b$ is the vertex $v_1$, $v_2$ or $z$ (otherwise,
the vertex obtained by contracting $v_1$ and $v_2$ has red degree at least seven).
In addition, the vertices $a$ and $b$ are the same vertex or adjacent in the skeleton (otherwise,
they have red degree at least seven after contracting $v_1$ and $v_2$).
Since the vertex $b$ cannot be the vertex $w'$ (as $v_1$ is not a red neighbor of $w'$ in $H$),
it follows that $a$ is the vertex $w'$ and $b$ is the common neighbor of $w'$ and $v_2$ different from $v_1$ in the skeleton (as
indicated in Figure~\ref{fig:cont556c}).
Since $\alpha$ has been contracted to $w'$,
the vertex resulting from the contraction of $v_1$ and $v_2$ is a red neighbor of $w'$,
i.e., after the contraction of $v_1$ and $v_2$, all skeleton neighbors of $w'$ are red.

The vertex $\gamma$ has been contracted to a red neighbor of $w'$ in $H$ (otherwise, $w'$ has red degree at least seven) and
this vertex must be a neighbor of $z$ in the skeleton of $H$ or the vertex $v''_1$ (otherwise,
the vertex to which $\gamma$ was contracted has red degree at least seven in $H$).
In the former case,
$\gamma$ has been contracted to $v_2$ since it cannot be contracted to $v_1$ (as
all six skeleton neighbors of $w'$ different from $v_1$ are red neighbors of $w'$).
In the latter case,
the skeleton degree of $v''_1$ is five both at the time of contraction of $\gamma$ and in $H$, and
$v''_1$ is a common neighbor of $w'$ and $b$,
which is the case only if the $5$-vertex $v'_1$ is a common neighbor of $v_1$ and $v_2$.
The contractions of the vertices $\alpha$, $\beta$ and $\gamma$ are indicated in Figure~\ref{fig:cont556c}.

\begin{figure}
\begin{center}
\epsfbox{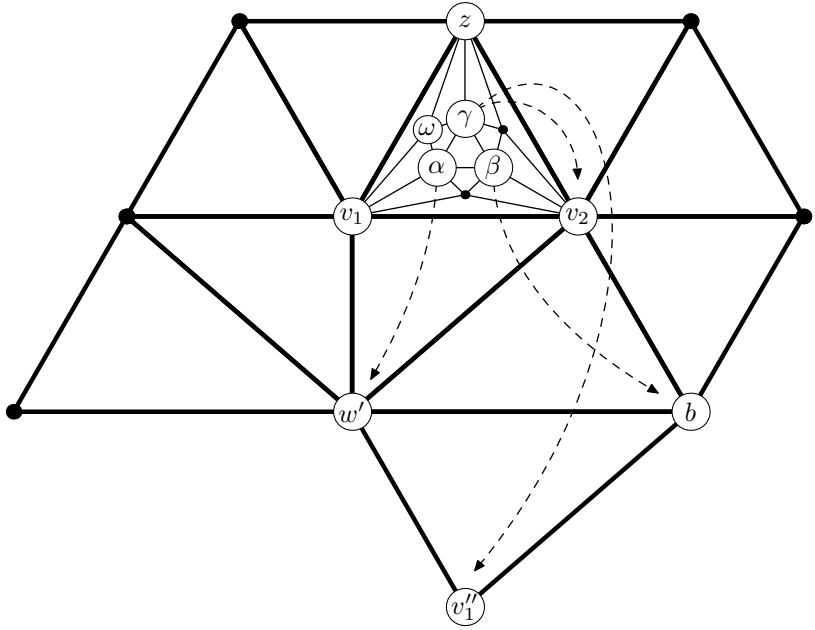}
\end{center}
\caption{Analysis presented in the proof of Lemma~\ref{lm:cont556}.}
\label{fig:cont556c}
\end{figure}

We now analyze the order of 
the contraction of the vertices $v'_1$ and $v'_2$, and
the contractions of the vertices $\alpha$, $\beta$ and $\gamma$ (or the vertices
to which they have been contracted to) to skeleton vertices.
We start by establishing that $\alpha$ was contracted to a skeleton vertex before $\beta$.
If this is not the case,
then the vertex $b$ has red degree at least seven just after $\beta$ is contracted to it:
if $\gamma$ has not yet been contracted, then $b$ has five red skeleton neighbors and
both $\alpha$ and $\gamma$ are also among its red neighbors,
if $\gamma$ has been contracted to $v_2$, then $b$ has six red skeleton neighbors and $\alpha$ is an additional red neighbor, and
if $\gamma$ has been contracted to $v''_1$, then $v''_1$ had five red skeleton neighbors and
$\alpha$ and $\beta$ were additional red neighbors of $v''_1$ just before the contraction $\beta$ to $b$.

We next establish that both $\alpha$ and $\beta$ were contracted to skeleton vertices before the contraction of $v'_1$ and $v'_2$.
If this is not the case, just before the contraction of $\beta$ to a skeleton vertex,
the vertex $w'$ exists and $\alpha$ has been contracted to it (as its contraction precedes that of $\beta$).
However, the red degree of $w'$ must be at least seven at that point as
its red neighbors are six skeleton neighbors and the vertex $\beta$.
We conclude that \emph{first the vertex $\alpha$ is contracted to either $v'_1$ or $v'_2$,
then $\beta$ is contracted to $b$ and then the vertices $v'_1$ and $v'_2$ are contracted}.

Note that either the vertex $v'_1$ or the vertex $v'_2$ is a common neighbor of the vertices $v_1$ and $v_2$,
which is the vertex $v'_j$ in Figure~\ref{fig:cont556}.
We next exclude that the vertex $v'_2$ is a common neighbor of $v_1$ and $v_2$,
i.e., the case when $j=2$ in Figure~\ref{fig:cont556}.
Suppose that $v'_2$ is a common neighbor of $v_1$ and $v_2$.
This implies that $\gamma$ has been contracted to $v_2$ ($\gamma$ can have been contracted to $v''_1$
only if $v'_1$ is a common neighbor of $v_1$ and $v_2$ as we established earlier
when discussing which vertices $\gamma$ can have been contracted to).
If $\alpha$ is contracted to $v'_1$,
then $b$ has at least seven red neighbors just after $\beta$ is contracted to it:
five red skeleton neighbors different from $v_2$,
the vertex $v'_1$ and
either $v_2$ or $\gamma$ (depending on whether $\gamma$ has been contracted to $v_2$ or not).
Hence, $\alpha$ has been contracted to $v'_2$.
Observe that $\gamma$ was contracted to $v_2$ before the contraction of $\alpha$ to $v'_2$ as
otherwise $v'_2$ would have at least seven red neighbors just after $\alpha$ is contracted to it:
five skeleton neighbors different from $v_1$ and the vertices $\beta$ and $\gamma$.
Note that we have established that the order of the contractions was as follows:
$\gamma$ to $v_2$, $\alpha$ to $v'_2$, $\beta$ to $b$, and finally the contraction of $v'_1$ and $v'_2$.

Let $\omega$ be the common neighbor of $v_1$, $z$, $\alpha$ and $\gamma$.
Note that $\omega$ was contracted before the contraction of $\gamma$ to $v_2$ (otherwise, $v_2$ would have seven red neighbors
just after $\gamma$ was contracted to it: five skeleton neighbors, $\alpha$ and $\omega$) and
the vertex $\omega$ was actually contracted to one of the following vertices:
$v_2$, a skeleton neighbor of $v_2$ or $\alpha$ (otherwise,
$v_2$ would have seven red neighbors just after $\gamma$ was contracted to it).
Since $\alpha$ has eventually been contracted to $v'_2$,
the vertex $\omega$ has been contracted to a red neighbor of $w'$ in $H$.
It follows that the vertex $\omega$ has been contracted to either the vertex $v_2$ or to the vertex $b$.
If $\omega$ was contracted to $v_2$,
then just after the contraction of $\omega$ to $v_2$,
the vertex $v_2$ would have four skeleton red neighbors and
$\alpha$, $\beta$ and $\gamma$ would also be its red neighbors,
which is impossible.
If $\omega$ was contracted to $b$,
the vertex $b$ would have six skeleton red neighbors and
$v_1$, $z$, $\alpha$ and $\gamma$ would also be its red neighbors.
We conclude that $v'_1$ is a common neighbor of $v_1$ and $v_2$ (as claimed in the statement of the lemma),
i.e., $j=1$ and $i=2$ in Figure~\ref{fig:cont556}.

Note that the vertex $\alpha$ cannot be contracted to $v'_2$ as
$v'_2$ would have at least seven red neighbors just after $\alpha$ is contracted to it:
five skeleton neighbors different from $v_1$,
the vertex $\beta$, and
either the vertex $v_2$ or $\gamma$ (note that 
if $\gamma$ has been contracted to $v''_1$,
then the contraction of $\gamma$ to $v''_1$ happened only after the contraction of $v'_1$ and $v'_2$).
So, the vertex $\alpha$ was contracted to $v'_1$.
This yields that all neighbors of $v'_1$ in the skeleton possibly except for $v_1$
were red neighbors of $v'_1$ before the contraction of $v'_1$ and $v'_2$.
The proof of the lemma is now finished.
\end{proof}

We are now ready to determine the twin-width of the graph $G_k$ with $k\ge\lbound$.

\begin{theorem}
\label{thm:main}
The twin-width of $G_k$ is $7$ for every $k\ge\lbound$.
\end{theorem}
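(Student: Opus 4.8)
The upper bound is exactly Lemma~\ref{lm:upper}, so it remains to prove that the twin-width of $G_k$ is at least seven for every $k\ge\lbound$. Assume for contradiction that some sequence of $6$-contractions reduces $G_k$ to a single vertex, and consider the associated sequence of trigraphs. I would track the skeleton along this sequence. Using that $k\ge\lbound$, at the outset the skeleton of $G_k$ is a triangulation of minimum degree five, has no separating $(\le 4)$-cycle, has no two adjacent $5$-vertices, and its twelve $5$-vertices are pairwise at distance more than two.

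The first step is to show that the skeleton can be modified only by contractions of its own edges, and that the structural conditions above persist long enough for the later lemmas to apply. As long as the skeleton has minimum degree five, no separating $(\le 4)$-cycle, and every two $5$-vertices have at most one common skeleton neighbour, Lemma~\ref{lm:cont-non} forbids contracting two non-adjacent skeleton vertices; since there is no separating $3$-cycle, any contraction of two skeleton vertices is then semiplanar, so the skeleton remains a triangulation, as observed after the definition of semiplanar contractions. Because the final trigraph is a single skeleton vertex and a contraction involving a non-skeleton vertex never changes the number of skeleton vertices, the sequence contains exactly $n-1$ contractions of two skeleton vertices, where $n=10k^2+20k+12$, and by the above each of these contracts a skeleton edge. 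Looking at the first such contraction, the absence of adjacent $5$-vertices together with Lemma~\ref{lm:cont66} forces it to contract an edge joining a $5$-vertex and a $6$-vertex of the skeleton.

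The heart of the proof is a propagation statement for such contractions. I would apply Lemma~\ref{lm:cont56} to the first $5$-$6$ contraction: the new vertex $w$ has red degree six, and in either outcome it is a $7$-vertex of the skeleton accompanied by a vertex $u_0$ all of whose neighbours are red. I would then argue, using Lemma~\ref{lm:cont556} and again Lemma~\ref{lm:cont66} to keep excluding $6$-$6$ contractions in the vicinity, that every subsequent contraction touching a vertex of such a defect is forced to be the one those lemmas describe: it contracts a $5$-$6$ edge with the current defect vertex as a common endpoint, the old defect turns into a vertex all of whose skeleton edges are red, and a new $7$-vertex appears in exactly the same defect configuration, so the analysis iterates. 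A short auxiliary observation --- contracting any edge incident with a vertex all of whose skeleton edges are red produces red degree at least seven --- shows that such a ``frozen'' vertex is never incident with a later contracted skeleton edge. Hence each of the $n-1$ skeleton-edge contractions converts a previously unfrozen vertex into a frozen one, while frozen vertices persist in the skeleton; this is impossible once their number exceeds the number of vertices remaining in the skeleton, and this contradiction proves the theorem.

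I expect the bookkeeping in the propagation step to be the main obstacle. One must verify that throughout the window in which the argument runs, the hypotheses of Lemmas~\ref{lm:cont56} and~\ref{lm:cont556} stay in force: minimum skeleton degree five, no separating $3$-cycle, the distance-two neighbourhood of each active $5$-$6$ pair consisting of $(\ge 6)$-vertices except for the previous defect, and no nearby $6$-vertex obtained by contracting two skeleton vertices. This is exactly where the quantitative hypothesis $k\ge\lbound$ enters, ensuring that the $5$-vertices of the skeleton stay far enough apart and that no short separating cycle is ever created; in particular one must check that the only way the ``frozen'' status could be lost --- a frozen $6$-vertex degrading to a $5$-vertex and then being contracted with an all-red $5$-neighbour --- cannot occur near an active defect.
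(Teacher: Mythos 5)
Your overall architecture matches the paper's: the upper bound is Lemma~\ref{lm:upper}, and the lower bound proceeds by contradiction, tracking the skeleton and using Lemmas~\ref{lm:cont-non}, \ref{lm:cont66}, \ref{lm:cont56} and~\ref{lm:cont556} to force the structure of contractions near the $5$-vertices. The gap is in your endgame. Your proposed indefinite propagation of the defect followed by a count of ``frozen'' vertices is not supported by the stated lemmas: Lemma~\ref{lm:cont556} cannot be iterated, since its hypotheses permit only one previously contracted vertex $w'$ (plus possibly $v''_1$) within distance two of the pair being contracted and require that no nearby $6$-vertex arose from merging two skeleton vertices; after three contractions near a given $5$-vertex these hypotheses fail, so the claim that ``a new $7$-vertex appears in exactly the same defect configuration, so the analysis iterates'' has no basis without proving a substantially stronger lemma. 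The counting itself is also problematic: in the second case of Lemma~\ref{lm:cont56} the all-red vertex $u_0$ is a \emph{non-skeleton} vertex, so that contraction freezes no skeleton vertex, and your auxiliary observation fails for a contraction of two adjacent $5$-vertices of the skeleton (which yields red degree $5+5-2-2=6$), a case you flag but do not resolve.

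What you are missing is that the contradiction already occurs at the \emph{third} contraction involving a skeleton vertex within distance two of a fixed $5$-vertex $z$ of the skeleton of $G_k$, so no unbounded propagation is needed. After the first contraction (of $z$ with a skeleton neighbour, forced by Lemmas~\ref{lm:cont-non} and~\ref{lm:cont66}) the new $7$-vertex $w$ has six red skeleton neighbours; the second contraction must involve one of the two newly created $5$-vertices, say $u$; and Lemma~\ref{lm:cont556}, applied to the third contraction (which must involve the $5$-vertex $u''$ created by the second), says \emph{retroactively} that just before the second contraction all skeleton neighbours of $u$ except possibly $u''$ were red --- in particular $w$ was then a red neighbour of $u$, giving $w$ red degree seven, a contradiction. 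Combined with the exclusion of contractions of non-adjacent skeleton vertices, of vertices already obtained by merging two skeleton vertices (red degree at least $5+6+6-9=8$), and of skeleton vertices far from every $5$-vertex, this bounds the number of skeleton contractions by two per $5$-vertex of $G_k$, far fewer than the $n-1$ you correctly note are required; the hypothesis $k\ge\lbound$ keeps the twelve $5$-vertices far enough apart that these local analyses do not interfere. You should replace the propagation/counting step with this bounded local argument.
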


\begin{proof}
Fix $k\ge\lbound$.
The twin-width of $G_k$ is at most $7$ by Lemma~\ref{lm:upper}.
We next show that the twin-width of $G_k$ is equal to $7$.
Suppose that the twin-width of $G_k$ is at most $6$ and
fix a sequence of $6$-contractions reducing the graph $G_k$ to a single vertex.

We analyze the initial part of this sequence until the first step 
when the contracted vertices $x$ and $y$ satisfy that
both $x$ and $y$ are skeleton vertices and one of the following holds:
\begin{itemize}
\item the vertices $x$ and $y$ are not adjacent,
\item at least one of the vertices $x$ and $y$ resulted from a contraction of two skeleton vertices,
\item neither $x$ nor $y$ is at distance at most two from a $5$-vertex of the skeleton of $G_k$, or
\item there exists a $5$-vertex $z$ of the skeleton of $G_k$ such that
      the contraction of $x$ and $y$
      is the third contraction involving a vertex at distance at most two from $z$ in the skeleton of $G_k$.
\end{itemize}
Observe that the choice of the step implies that
all $6$-contractions till this step are semiplanar and
there is no vertex $v$ such that three or more skeleton vertices of $G_k$ have been contracted to $v$.
In addition, we will show that
the assumptions of Lemmas~\ref{lm:cont-non} and~\ref{lm:cont66} are satisfied till the considered step, and
all $6$-contractions happen ``locally'' around the $5$-vertices of the skeleton of $G_k$.

We will rule out that any of the four cases described above can happen.
Since the assumptions of Lemmas~\ref{lm:cont-non} and~\ref{lm:cont66} are satisfied,
the first or the third cases cannot apply.
Similarly, the second case cannot apply as
it would result in a vertex with red degree
at least $5+6+6-9=8$ (if exactly one of the vertices $x$ and $y$ resulted from a contraction of two skeleton vertices) or
at least $5+6+6+6-14=9$ (if there both $x$ and $y$ resulted from a contraction of two skeleton vertices).
Hence, the last case is the only one that can possibly apply.
We next analyze possible $6$-contractions in the vicinity of each $5$-vertex of the skeleton of $G_k$.

If $z$ is a $5$-vertex of the skeleton of $G_k$,
Lemmas~\ref{lm:cont-non} and~\ref{lm:cont66} imply that
the first contraction involving a skeleton vertex at distance at most two from $z$ and another skeleton vertex
is a contraction of $z$ and a neighbor of $z$ in the skeleton.
This results in the skeleton having a $7$-vertex $w$ adjacent to two $5$-vertices $u$ and $u'$
while the other vertices remain $6$-vertices and
no separating $(\le 4)$-cycle in the skeleton is created.
Hence, after the first contraction involving a skeleton vertex at distance at most two from $z$,
the graph still satisfies the assumptions of Lemmas~\ref{lm:cont-non} and~\ref{lm:cont66}, and
the graph satisfies one of the two conclusions of Lemma~\ref{lm:cont56}.

If the second contraction involving a skeleton vertex at distance at most two from $z$ did not involve $u$ or $u'$,
the second contraction would be the contraction of the two adjacent $6$-vertices that are neighbors of $u$ and $u'$ (one of them
is a common neighbor of $u$ and $w$, and the other is a common neighbor $u'$ and $w$) by Lemma~\ref{lm:cont66} and
the resulting vertex would have red degree at least seven (note that $w$ would be a red neighbor of the resulting vertex).
Hence, the second contraction involving a skeleton vertex at distance at most two from $z$
involves one of the vertices $u$ and $u'$, say $u$, and an adjacent $6$-vertex in the skeleton.
Lemma~\ref{lm:cont556} yields (note that we use $k\ge 7$ here) that
all neighbors of $w$ except for $u$ in the skeleton are red and
all five neighbors of $u'$ in the skeleton are red.
The contraction changes the $7$-vertex $w$ to a $6$-vertex
while creating a new $7$-vertex and a new $5$-vertex $u''$ in the skeleton (see Figure~\ref{fig:main:555}).
Note that $u''$ is not a red neighbor of the new $7$-vertex,
all six skeleton neighbors of $w$ are red, and
the assumptions of Lemmas~\ref{lm:cont-non} and~\ref{lm:cont66} are still satisfied.

\begin{figure}
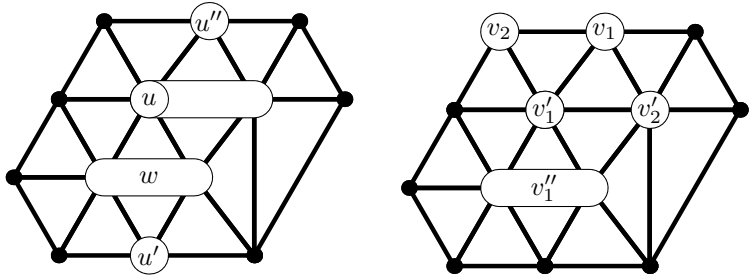

\begin{center}
\epsfbox{twin7-12.mps}
\hskip 5mm
\epsfbox{twin7-13.mps}
\end{center}
\caption{The only possible contractions in the vicinity of a $5$-vertex of the skeleton of $G_k$
         as established in the proof of Theorem~\ref{thm:main}.
         Both configurations represent the same situation:
	 the left with the notation from the proof of Theorem~\ref{thm:main}, and
	 the right with the notation from Lemma~\ref{lm:cont556} as applied with respect to the third contraction.}
\label{fig:main:555}
\end{figure}

The third contraction involving a skeleton vertex at distance at most two from $z$ must involve $u''$ (note that
all five neighbors of $u'$ in the skeleton are red and so its contraction would yield a vertex of red degree $5+6-4=7$).
Lemma~\ref{lm:cont556} can be applied as $k\ge\lbound$ and
the contraction involving $u$ resulted in the first case described in Lemma~\ref{lm:cont56}.
The last point of the conclusion of Lemma~\ref{lm:cont556} yields that
before contracting $u$ and its neighbor,
all neighbors of $u$ in the skeleton possibly except for $u''$ were red.
In particular,
the vertex $w$ was a red neighbor of $u$ before the second contraction,
which is impossible (the remaining six neighbors of $w$ in the skeleton
are red as established earlier).
It follows that the fourth case cannot happen and 
a sequence of $6$-contractions reducing the graph $G_k$ to a single vertex does not exist.
\end{proof}

\section*{Acknowledgement}

The authors would like to thank both anonymous reviewers for carefully reading the paper and
their numerous comments that helped to improve the clarity of the arguments.

\bibliographystyle{bibstyle}
\bibliography{twin7}

\end{document}